\newcommand{\bs}[1]{\boldsymbol{#1}}
\newcommand{\supp}{\textrm{supp}}
\renewcommand{\tilde}[1]{\widetilde{#1}}
\newlength{\leftstackrelawd}
\newlength{\leftstackrelbwd}
\def\leftstackrel#1#2{\settowidth{\leftstackrelawd}%
{${{}^{#1}}$}\settowidth{\leftstackrelbwd}{$#2$}%
\addtolength{\leftstackrelawd}{-\leftstackrelbwd}%
\leavevmode\ifthenelse{\lengthtest{\leftstackrelawd>0pt}}%
{\kern-.5\leftstackrelawd}{}\mathrel{\mathop{#2}\limits^{#1}}}
\def\@tempa#1{\@xp\@tempb\meaning#1\@nil#1}
\def\@tempb#1>#2#3 #4\@nil#5{%
  \@xp\ifx\csname#3\endcsname\mathaccent
    \@tempc#4?"7777\@nil#5%
  \else
    \PackageWarningNoLine{amsmath}{%
      Unable to redefine math accent \string#5}%
  \fi
}
\def\@tempc#1"#2#3#4#5#6\@nil#7{%
  \chardef\@tempd="#3\relax\set@mathaccent\@tempd{#7}{#2}{#4#5}}
\title{Computation of Induced Orthogonal Polynomial Distributions}
\author{Akil Narayan}
\thanks{Department of Mathematics, and Scientific Computing and Imaging (SCI) Institute, University of Utah, USA. {\tt akil@sci.utah.edu}. A. Narayan is partially supported by NSF DMS-1552238, AFOSR FA9550-15-1-0467, and DARPA EQUiPS N660011524053.}
\begin{document}

\maketitle
\begin{abstract}
  We provide a robust and general algorithm for computing distribution functions associated to induced orthogonal polynomial measures. We leverage several tools for orthogonal polynomials to provide a spectrally-accurate method for a broad class of measures, which is stable for polynomial degrees up to at least degree 1000. Paired with other standard tools such as a numerical root-finding algorithm and inverse transform sampling, this provides a methodology for generating random samples from an induced orthogonal polynomial measure. Generating samples from this measure is one ingredient in optimal numerical methods for certain types of multivariate polynomial approximation. For example, sampling from induced distributions for weighted discrete least-squares approximation has recently been shown to yield convergence guarantees with a minimal number of samples. We also provide publicly-available code that implements the algorithms in this paper for sampling from induced distributions.
\end{abstract}

%\begin{keywords}
%  Orthogonal polynomials, induced distributions, sampling
%\end{keywords}
%
%\begin{AMS}
%  asdf
%\end{AMS}

\section{Introduction}

Let $\mu$ be a probability measure on $\R$ such that a family of $L^2_{\dx{\mu}}$-orthonormal polynomials $\left\{p_n\right\}_{n=0}^\infty$ can be defined.\footnote{See Section \ref{sec:background:ops} for technical conditions implying this.} The non-decreasing function
\begin{align*}
  F_n(x) &= \int_{-\infty}^x p_n^2(t) \dx{\mu}(t), & x &\in \R.
\end{align*}
is a probability distribution function on $\R$ since $p^2_n$ has unit $\mu$-integral over $\R$. This paper is chiefly concerned with developing algorithms for drawing random samples of a random variable whose cumulative distribution function is $F_n$. The high-level algorithmic idea is straightforward: develop robust algorithms for evaluating $F_n$, and subsequently use a standard root-finding approach to compute $F_n^{-1}(U)$ where $U$ is a continuous uniform random variable on $[0,1]$. (This is colloquially called ``inverse transform sampling".) The challenge that this paper addresses is in the computational evaluation of $F_n(x)$ for any $n \in \N_0$ and for relatively general $\mu$. Borrowing terminology from \cite{gautschi_set_1993}, we call $F_n$ the order-$n$ distribution \textit{induced} by $\mu$. 

In our algorithmic development, we focus on three classes of continuous measures $\mu$ from which induced distributions spring: (1) Jacobi distributions on $[-1,1]$, (2) Freud (i.e., exponential) distributions on $\R$, and (3) ``Half-line" Freud distributions on $[0, \infty)$. These measures encompass a relatively broad selection of continuous measures $\mu$ on $\R$. %We briefly consider a fourth case of discrete measures $\mu$, in which case a discrete measure, a relatively straightforward algorithm for evaluating $F_n(x)$ can be described. See Section ???.

  The utility of sampling from univariate induced distributions has recently come into light: The authors in various papers \cite{hampton_coherence_2015,narayan_christoffel_2016,cohen_optimal_2016} note that additive mixtures of induced distributions are optimal sampling distributions for constructing multivariate polynomial approximations of functions using weighted discrete least-squares from independent and identically-distributed random samples. ``Optimal" means that these distributions define a sampling strategy which provides stability and accuracy guarantees with a sample complexity that is currently thought to be the best (smallest). This distribution also arises in related settings \cite{jakeman_generalized_2016}. The ability to sample from an induced distribution, which this paper addresses, therefore has significant importance for multivariate applied approximation problems. 

  Induced distributions can also help provide insight for more theoretical problems. The weighted pluripotential equilibrium measure is a multivariate probability measure that describes asymptotic distributions of optimal sampling points \cite{bloom_convergence_2010,narayan_christoffel_2016}. However, an explicit form for this measure is not known in general. The authors in \cite{narayan_christoffel_2016} make conjectures about the Lebesgue density associated to equilibrium measure in one case when its explicit form is currently unknown. While these conjectures remain unproven, univariate induced distributions can be used to simulate samples from the equilibrium measure. Hence, sampling from induced distributions can be used to provide supporting evidence for the theoretical conjectures in \cite{narayan_christoffel_2016}. 
  
  The outline of this paper is as follows: in Section \ref{sec:background} we review many standard properties of general orthogonal polynomial systems that are exploited for computing induced distributions. Section \ref{sec:Fn-eval} contains a detailed discussion of our novel approach for computing $F_n(x)$ for three classes of measures; this section also utilizes potential theory results in order to approximate $F_n^{-1}(0.5)$. Section \ref{sec:invert-induced} uses the previous section's algorithms in order to formulate an algorithmic strategy for computing $F_n^{-1}(u)$, $u \in [0, 1]$. Finally, Section \ref{sec:applications} discusses the above-mentioned applications of multivariate polynomial approximation using discrete least-squares, and investigating conjectures for a weighted equilibrium measure.
%This application of induced distributions is described in Section \ref{sec:applications-equilibrium}.

  Code that reproduces many of the plots in this paper is available for download \cite{narayan_akilnarayan/induced-distributions}. The code contains routines for accomplishing almost all of the procedures in this paper including evaluation and inversion of induced distributions (for many of the distributions in Table \ref{tab:measures}), inverse transform sampling for multivariate sampling from additive mixtures of induced distributions, and fast versions of all codes that utilize approximate monotone spline interpolants for fast evaluation and inversion of distribution functions. The code also contains routines that reproduce Figure \ref{fig:jacobi-distribution} (left, center), Figure \ref{fig:algorithm} (right), Figure \ref{fig:jacobi-error} (right), Figure \ref{fig:half-freud-mrs} (left), Figure \ref{fig:half-freud-error} (left), and Figure \ref{fig:equilibrium} (left). 

\begin{table}
  \begin{center}
  \resizebox{\textwidth}{!}{
    \renewcommand{\tabcolsep}{0.2cm}
    \renewcommand{\arraystretch}{1.3}
    {\scriptsize
    %\begin{tabular}{@{}cp{0.8\textwidth}@{}}
    \begin{tabular}{@{}lllllp{0.3\textwidth}@{}}
      \toprule
      Jacobi & $\dx{\mu}^{(\alpha,\beta)}_J(x) = \frac{1}{c_J^{(\alpha,\beta)}} (1-x)^\alpha (1+x)^\beta$ & $x \in [-1,1]$ & $\alpha > -1$ & $\beta > -1$ & $c_J^{(\alpha,\beta)} = 2^{\alpha + \beta + 1} B(\beta + 1, \alpha + 1)$ \\
      Half-line Freud & $\dx{\mu}^{(\alpha,\rho)}_{HF}(x) = \frac{1}{c_{HF}^{(\alpha,\rho)}} x^\rho \exp\left(-x^\alpha\right)$ & $x \geq 0$ & $\alpha > \frac{1}{2}$ & $\rho > -1$ & $c_{HF}^{(\alpha,\rho)} = \frac{1}{\alpha} \Gamma \left(\frac{\rho+1}{\alpha}\right)$ \\
      Freud & $\dx{\mu}^{(\alpha,\rho)}_{F}(x) = \frac{1}{c_{F}^{(\alpha,\rho)}} |x|^\rho \exp\left(-|x|^\alpha\right)$ & $x \in \R$ & $\alpha > 1$ & $\rho > -1$ & $c_{F}^{(\alpha,\rho)} = \frac{2}{\alpha} \Gamma \left(\frac{\rho+1}{\alpha}\right)$ \\
    \bottomrule
    \end{tabular}
  }
    \renewcommand{\arraystretch}{1}
    \renewcommand{\tabcolsep}{12pt}
  }
  \end{center}
  \caption{Classes of measures considered in this paper. $\Gamma(\cdot)$ is the Euler Gamma function, and $B(\cdot)$ is the Beta function.}\label{tab:measures}
\end{table}

\subsection{A simple example}

The main algorithmic novelties of this paper revolve around evaluation of $F_n(x)$. %However, clearly such an evalution is simply a univariate integral (albeit with oscillatory integrand). 
In Figure \ref{fig:jacobi-distribution} we show one example of the integrand $p_n^2(x) \dx{\mu}(x)$ and the associated $F_n$. One suspects that packaged integration routines should be able to perform relatively well in order to compute integrals for such a problem. In our experience this is frequently true, but comes at a price of increased computational effort and time, and decreased robustness. The right-hand pane in Figure \ref{fig:jacobi-distribution} shows timings for Matlab's built-in \texttt{integral} routine versus the algorithms developed in this paper. We see that the algorithms in this paper are much faster, usually resulting in around an order of magnitude savings.

\begin{figure}
  \begin{center}
    \includegraphics[width=0.32\textwidth]{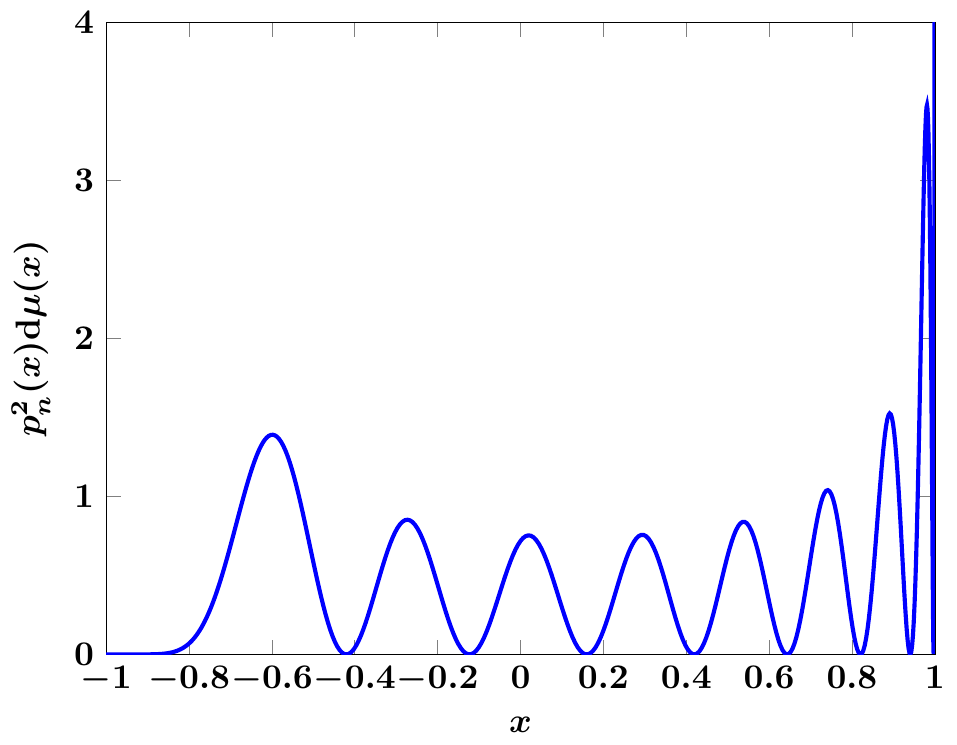}
    \includegraphics[width=0.32\textwidth]{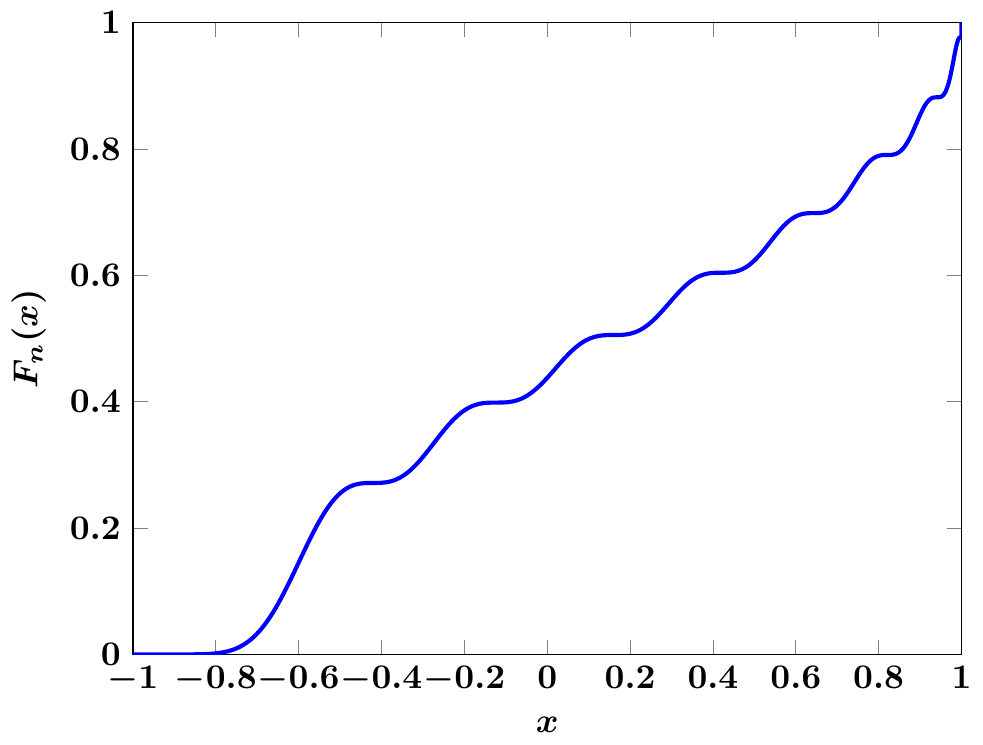}
    \includegraphics[width=0.32\textwidth]{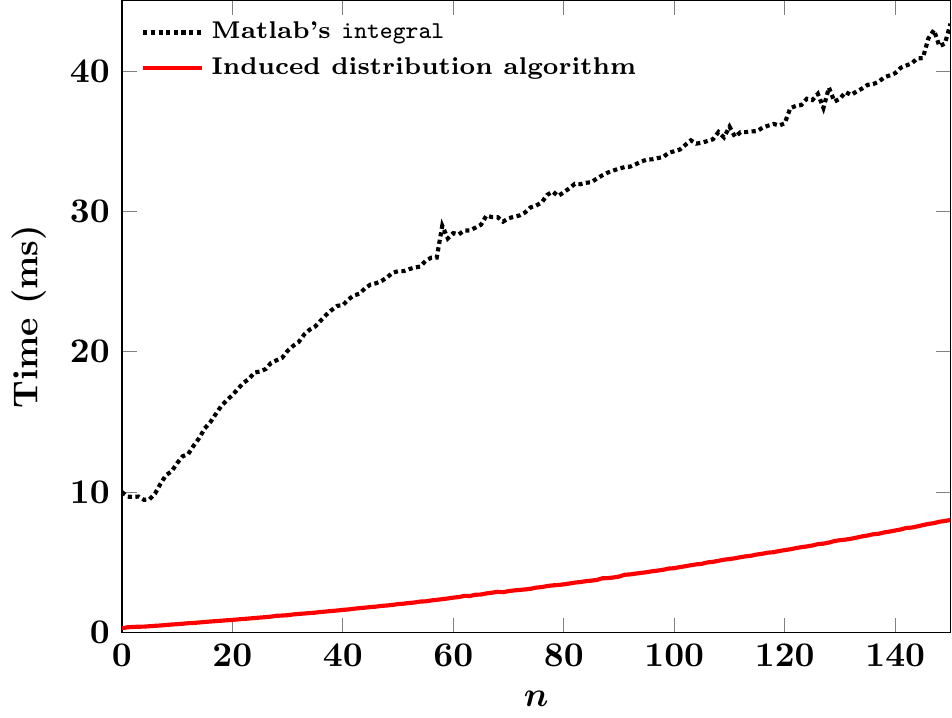}
  \end{center}
  \caption{Left: The induced distribution integrand $p_n^2(x) \dx{\mu}(x)$ for the Jacobi measure $\mu^{(\alpha,\beta)}_J$ in Table \ref{tab:measures} with parameters $\alpha = -0.8$, $\beta = \sqrt{101}$, and $n=13$. Center: The associated distribution function $F_n(x)$. Right: Computational timings for evaluation of $F_n(\cdot)$ (averaged over 100 runs) using Matlab's built-in \texttt{integral} routine compared to the algorithms developed in this paper. Timings were performed in Matlab (2015b) on a single-core 1.7 GHz Intel i5 processor with 4GB of RAM.}\label{fig:jacobi-distribution}
\end{figure}

Our experimentation (using Matlab's \texttt{integral}) also reveals the following advantages of using the specialized algorithm in this paper:
\begin{itemize}
  \item The results from our algorithm appear to be more accurate compared to standard routines, and use significantly less computation. This conclusion is based on our testing with \texttt{integral}, and is true even if one modifies algorithm tolerances in \texttt{integral}.%We set parameters in our algorithm to ``overkill" the discretization, yet our algorithm is faster and appears to be more accurate by about 4 digits. When we require \texttt{integral} to be as accurate as our algorithm, the timings for \textit{integral} in Figure \ref{fig:jacobi-distribution} are even slower.
  \item We have occasionally observed \texttt{integral} return non-monotonic evaluations for the distribution $F_n$. This typically happens when most of the mass of the integrand is concentrated far from the boundary of the support of $\mu$. Non-monotonic behavior causes problems in performing inverse transform sampling.
  \item When $\dx{\mu}(x)$ has a singularity at boundaries of the support of $\mu$, then \texttt{integral} frequently complains about singularities and failure to achieve error tolerances.
\end{itemize}
%One could likely specialize \texttt{integral} to address all of the difficulties above, but if one is going to such trouble to specialize an algorithm, we argue that it's better to use the specialized algorithm we have developed, especially since our algorithm appears to be notably faster.

\subsection{Historical discussion}
The distribution function of the arcsine or ``Chebyshev" measure is
\begin{align*}
  F(x) &= \frac{1}{\pi} \int_{-1}^x \frac{1}{\sqrt{1 - t^2}} \dx{t} = \frac{1}{2} + \frac{1}{\pi} \arcsin(x), & x &\in [-1,1].
\end{align*}
It was shown in \cite{nevai_orthogonal_1980} that if $\mu$ belongs to the Nevai class of measures, then $F_n(x) \rightarrow F(x)$ pointwise for all $x \in [-1,1]$. Further refinements on this statement were made in \cite{simon_ratio_2004} which generalized the class of measures for which convergence holds. Generating polynomials orthogonal with respect to the measure associated to $F_n$ is considered in \cite{gautschi_set_1993} with a generalization given in \cite{li_construction_1999}. 

The authors in \cite{hampton_coherence_2015} proposed sampling from an additive mixture of induced distributions using a Markov Chain Monte Carlo method for the purposes of computing polynomial approximations of functions via discrete least-squares; the work in \cite{narayan_christoffel_2016} investigates sampling from the $n$-asymptotic limit of these additive mixtures. The authors in \cite{cohen_optimal_2016} leverage the additive mixture property to sample from this distribution using a monotone spline interpolant. At the very least this latter method requires multiple evaluations of $F_n(x)$. To our knowledge there has been essentially no investigation into robust algorithms for the evaluation of $F_n$ for broad classes of measures, which is the subject of this paper. 

\section{Background}\label{sec:background}

\subsection{Orthogonal polynomials}\label{sec:background:ops}
This section contains classical knowledge, most of which is available from any seminal reference on orthogonal polynomials \cite{szego_orthogonal_1975,freud_orthogonal_1971,nevai_orthogonal_1980,gautschi_orthogonal_2004}.

Let $\mu$ be a Lebesgue-Stiltjies probability measure on $\R$, i.e., the distribution function 
\begin{align}\label{eq:F-definition}
  F(x) = \int_{-\infty}^x \dx{\mu}(t),
\end{align}
is non-decreasing and right-continuous on $\R$, with $F(-\infty) = 0$ and $F(\infty) = 1$. For any distribution $F$ we use the notation $F^c(x) \coloneqq 1 - F(x)$ for its complementary function.

We assume that $\mu$ has in infinite number of points of increase, and has finite polynomial moments of all orders, i.e.,
\begin{align*}
  \left|\int_\R x^n \dx{\mu}(x)\right| &< \infty, & n = 0, 1, \ldots.
\end{align*}
Under these assumptions, a sequence of orthonormal polynomials $\left\{p_n \right\}_{n=0}^\infty$ exists, with $\deg p_j = j$, satisfying
\begin{align*}
  \int_{\R} p_j(x) p_k(x) \dx{\mu(x)} = \delta_{k,j},
\end{align*}
where $\delta_{k,j}$ is the Kronecker delta function. We will write $p_n(\cdot) = p_n(\cdot; \mu)$ to denote explicit dependence of $p_n$ on $\mu$ when necessary. Such a family can be mechanically generated by iterative application of a three-term recurrence relation:
\begin{align}\label{eq:three-term-recurrence}
  x p_n(x) = \sqrt{b_{n}} p_{n-1}(x) + a_n p_n(x) + \sqrt{b_{n+1}} p_{n+1}(x),
\end{align}
where the recurrence coefficients $a_n$ and $b_n$ are functions of the moments of $\mu$. The initial conditions $p_{-1} \equiv 0$ and $p_0 \equiv 1$ are used to seed the recurrence. With $p_n$ defined in this way, the (positive) leading coefficient of $p_n$ has value
\begin{align}\label{eq:leading-coefficient}
  \gamma_n &\coloneqq \prod_{j=0}^n \frac{1}{\sqrt{b_j}}, & p_n(x) &= \gamma_n x^n + \cdots.
\end{align}
The polynomial $p_n$ has $n$ real-valued, distinct roots lying inside the support of $\mu$, and these roots $\left\{x_k\right\}_{k=1}^n$ are nodes for the Gaussian quadrature rule,
\begin{align*}
  \int_{\R} f(x) \dx{\mu}(x) &= \sum_{k=1}^n w_k f(x_k), & f &\in \mathrm{span} \left\{ 1, x, x^2, \ldots, x^{2n-1} \right\},
\end{align*}
where the weights $w_k$ are unique and positive. These nodes and weights can be computed having knowledge only of the recurrence coefficients $a_k$ and $b_k$; numerous modern algorithms accomplish this, with a historically significant procedure given in \cite{golub_calculation_1969}.

\subsection{Induced orthogonal polynomials and measures}
With $p_n$ the orthonormal polynomial family with respect to $\mu$, the collection of polynomials orthogonal with respect to the weighted distribution $p_n^2(x) \dx{\mu(x)}$ with $n$ fixed are called \textit{induced} orthogonal polynomials. We adopt this terminology from \cite{gautschi_set_1993}.

Define the Lebesgue-Stiltjies measure $\mu_n$ and its associated distribution function $F_n$ by
\begin{align}\label{eq:mun-def}
  F_n\left(x; \mu\right) = F_n(x) \coloneqq \int_{-\infty}^x \dx{\mu_n}(t) \coloneqq \int_{-\infty}^x p_n^2(t) \dx{\mu(t)},
\end{align}
with $p_n(\cdot) = p_n(\cdot; \mu)$ the orthonormal family for $\mu$. Note that $\mu_n \ll \mu$, and $F_{n}(\infty) = \mu_n\left(\R\right) = 1$ so that $\mu_n$ is also a probability measure. The measure $\mu_n$ has its own three-term recurrence coefficients $a_{j,n}$ and $b_{j,n}$ for $j=0, \ldots, $ that define a new set of $L^2_{\mu_n}\left(\R\right)$-orthonormal polynomials, which can be generated through the corresponding version of the mechanical procedure \eqref{eq:three-term-recurrence}. One such procedure for generating these coefficients is given in \cite{gautschi_set_1993}.

We will call the measure $\mu_n$ the (order-$n$) \textit{induced} measure for $\mu$, and $F_n$ the corresponding (order-$n$) induced distribution function. %We will call these new recurrence coefficients \textit{induced} recurrence coefficients, following the terminology introduced in CITE.

Our main computational goal is, given $u \in [0,1]$, the evaluation of $F_{n}^{-1}(u)$ for various measures $\mu$. The overall algorithm for accomplishing this is a root-finding method, e.g., bisection or Newton's method. Thus, the goal of finding $F^{-1}_{n}(u)$ also involves the evaluation of $F_{n}(x)$, which is the focus of Section \ref{sec:Fn-eval}. A good root-finding algorithm also requires a reasonable initial guess for the solution. This initial guess is provided by the methodology in Section \ref{sec:ms-interval}.
%Finally, we note that without loss we may assume $n > 0$ since the $n=0$ version of \eqref{eq:mun-def} shows that $F_{0} \equiv F$.

\subsection{Measure modifications}
Our algorithms rely on the ability to compute polynomial measure modifications. That is, given the three-term coefficients $a_n$ and $b_n$ for $\mu$, to compute the coefficients $\widetilde{a}_n$ and $\widetilde{b}_n$ for $\widetilde{\mu}$ defined as
\begin{align*}
  \dx{\widetilde{\mu}}(x) = p(x) \dx{\mu}(x),
\end{align*}
where $p(x)$ is a polynomial, non-negative on the support of $\mu$. This is a well-studied problem \cite{golub_modified_1973,gautschi_interplay_2002,gautschi_orthogonal_2004,narayan_computation_2012}. In particular, one may reduce the problem to iterating over modifications by linear and quadratic polynomials.  We describe in detail how to accomplish linear and quadratic modifications in the appendix, with the particular goal of structuring computations to avoid numerical under- and over-flow when $n$ is large.

%Straightforward implementation of the algorithms described later in this paper results in algorithms that work quite well for small $n$, but that succumb to numerical under- or over-flow for moderate-to-large values of $n$. (Such numerical issues are especially problematic for Freud or half-line Freud weights.) 

%This section states summarizes these relatively delicate computational tasks; a description of how we overcome finite-precision limitations is described in the given Appendix sections.

The following computational tasks described in the Appendix are used to accomplish measure modifications.
\begin{enumerate}
  \item (Appendix \ref{app:auxilliary:ratio}) Evaluation of $r_n$, the ratio of successive polynomials in the orthogonal sequence:
    \begin{align*}
      r_j(x) &\coloneqq \frac{p_j(x)}{p_{j-1}(x)}.
    \end{align*}
    Above, we require $x$ to lie outside the zero set of $p_{j-1}$.
  \item (Appendix \ref{app:auxilliary:quadratic}) Evaluation of a normalized or weighted degree-$n$ polynomial:
    \begin{align*}
      C_n(x) &\coloneqq \frac{p_n(x)}{\sqrt{\sum_{j=0}^{n-1} p_j^2(x)}}, & n &> 0, \; x\in \R
    \end{align*}
    Note that $C_n(x)/r_n(x) \sim 1$ for large enough $|x|$.
  \item (Appendix \ref{app:measure-modifications}) Polynomial measure modifications: given $\mu$ and its associated three-term recurrence coefficients $a_n$ and $b_n$, computation of the three-term recurrence coefficients associated with the measures $\widetilde{\mu}$ and $\widetilde{\widetilde{\mu}}$, defined as 
    \begin{align*}
      \dx{\widetilde{\mu}}(x) &= \pm \left(x - y_0 \right) \dx{\mu}(x), & y_0 &\not\in \mathrm{supp}\,\mu \\
      \dx{\widetilde{\widetilde{\mu}}}(x) &= \left(x - z_0 \right)^2 \dx{\mu}(x), & z_0 &\in \R
    \end{align*}
    The $\pm$ sign in $\widetilde{\mu}$ is chosen so that $\widetilde{\mu}$ is a positive measure.
\end{enumerate}

\begin{figure}
  \includegraphics[width=0.5\textwidth]{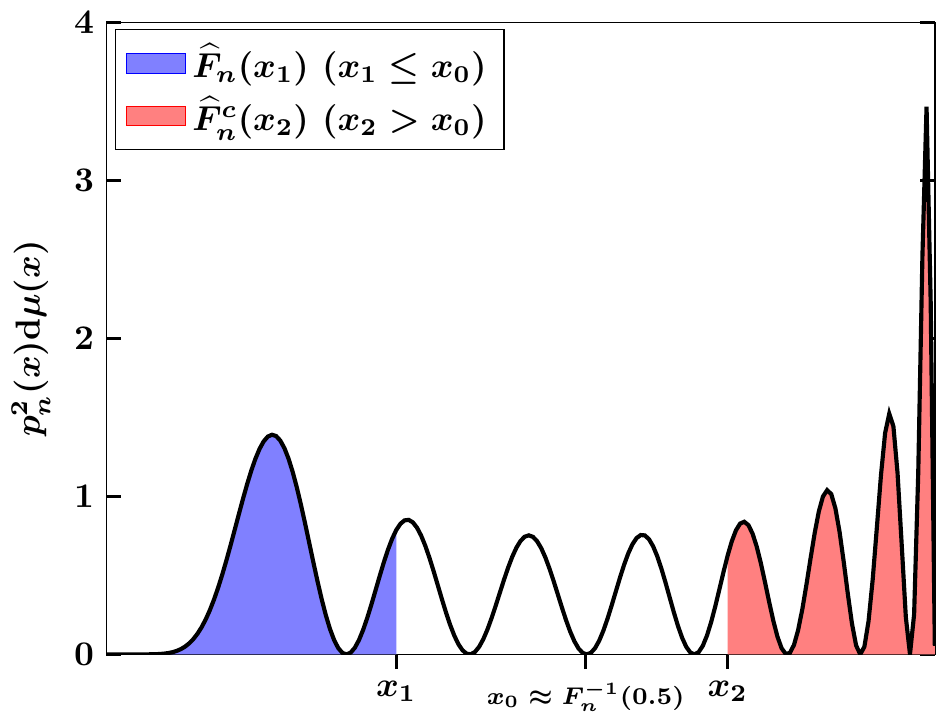}
  \includegraphics[width=0.49\textwidth]{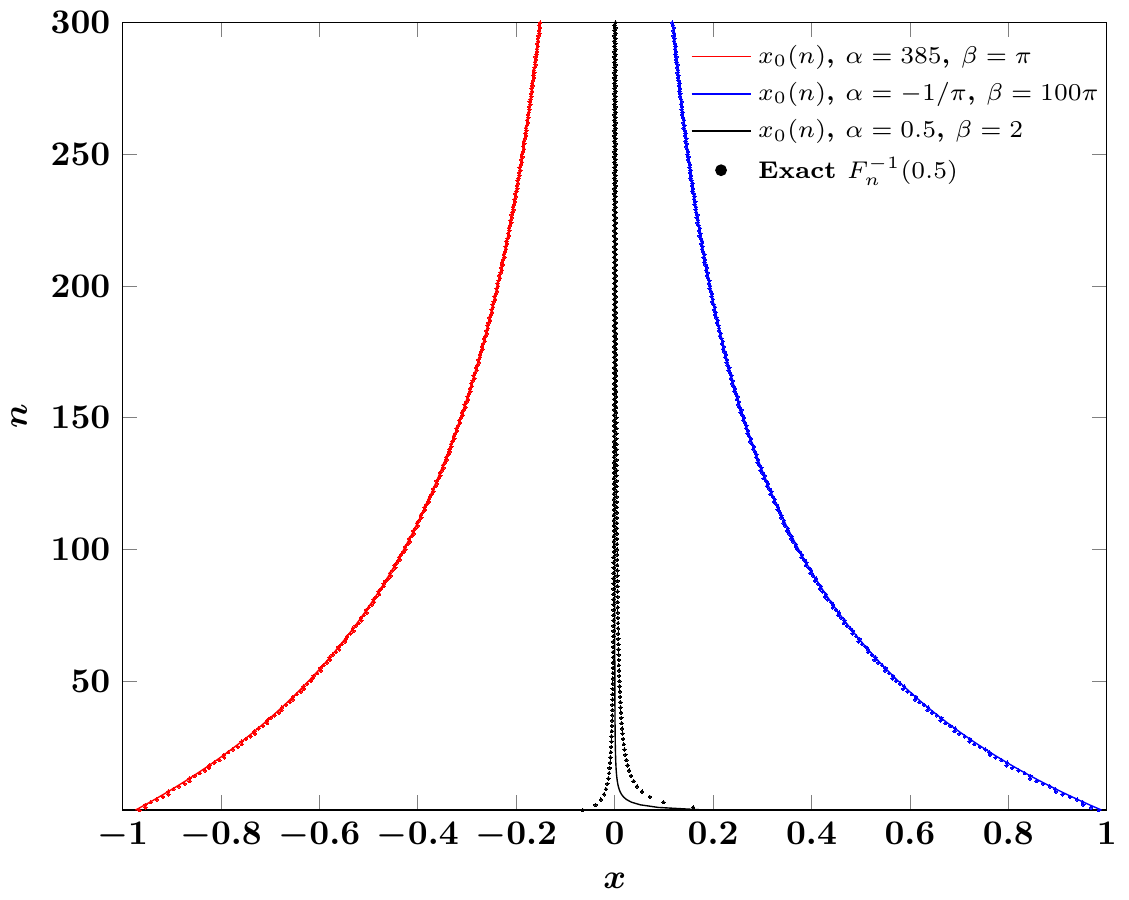}
  \caption{Left: A visual description of the high-level algorithm given by equation \eqref{eq:Fn-computation}. Right: Illustration of exact values of $F_n^{-1}(0.5)$ versus the approximate medians $x_0(n)$ derived in \eqref{eq:x0-jacobi} and \eqref{eq:x0-half-freud} for Jacobi and Half-Freud weights, respectively. Left: Jacobi weights for various $n$ with some choices of $(\alpha, \beta)$. Right: Half-Freud weights for various $n$ with some choices of $(\alpha, \rho)$.}\label{fig:algorithm}
\end{figure}

\section{Evaluation of $F_n$}\label{sec:Fn-eval}
This section develops computational algorithms for the evaluation of the induced distribution $F_n$ defined in \eqref{eq:mun-def}. These algorithms depend fairly heavily on the form of a Lebesgue density $\dx{\mu}(x)$ (i.e., a positive weight function) for the measure $\mu$ on $\R$, but the ideas can be generalized to various measures. We consider the classes of weights enumerated in Table \ref{tab:measures}:
\begin{itemize}
  \item (Jacobi weights) $\dx{\mu}_J(x) = (1-x)^\alpha (1+x)^\beta$ for $x \in [-1,1]$ with parameters $\alpha, \beta > -1$.
  \item (Freud weights) $\dx{\mu}_F(x) = |x|^\rho \exp\left(-|x|^\alpha\right)$ for $x \in \R$ with parameters $\alpha > 0$, $\rho >-1$.
  \item (half-line Freud weights) $\dx{\mu}_{HF}(x) = x^\rho \exp\left(-x^\alpha\right)$ for $x \in [0, \infty)$ with parameters $\alpha > 0$, $\rho >-1$.
\end{itemize}
The induced distribution $F_n$ for Freud weights can actually be written explicitly in terms of the corresponding induced distribution for half-line Freud weights, so most of the algorithm development concentrates on the Jacobi and half-line Freud cases. The strategy for these two latter cases is essentially the same: with $\mu$ in one of the classes above and $n$ fixed, we divide the computation into one of two approximations, depending on the value of $x$. Each approximation is accurate for its corresponding values of $x$. Formally, the algorithm is 
\begin{align}\label{eq:Fn-computation}
  F_n\left(x\right) &= \left\{ \begin{array}{rl} \widehat{F}_n(x), & x \leq x_0\left(\mu, n\right) \\
    1 - \widehat{F}_n^c(x), & x > x_0\left(\mu, n\right)
  \end{array}\right.
\end{align}
where $\widehat{F}_n(x)$ and $\widehat{F}^c_n(x)$ represent computational approximations to $F_n(x)$ and $F^c_n(x)$, respectively, and are the outputs from the algorithms that we will develop. A pictorial description of this is given in Figure \ref{fig:algorithm} (left). The constant $x_0$ is ideally $F_n^{-1}(0.5)$; since we cannot know this value \textit{a priori}, we use potential-theoretic arguments to compute a value $x_0 = x_0\left(\mu, n\right)$ approximating the median of $F_n$,
\begin{align}\label{eq:approximate-median}
  F_n\left(x_0\right) \approx \frac{1}{2}.
\end{align}
For our choice of $x_0$, we can provide no estimates for this approximate equality, but empirical evidence in Figure \ref{fig:algorithm} (right) shows that our choices are very close to the real median, uniformly in $n$. The coming sections concentrate on, for each class of $\mu$ mentioned above, specifying $x_0$ and detailing algorithms for $\widehat{F}_n$ and $\widehat{F}_n^c$.

\subsection{Jacobi weights}
We consider computing induced distribution functions for Jacobi measures $\mu^{(\alpha,\beta)}_J$ as defined in Table \ref{tab:measures}.
%\begin{align*}
%  \dx{\mu_J^{(\alpha,\beta)}}(x) &= \frac{1}{c_J^{(\alpha,\beta)}} (1-x)^\alpha (1+x)^\beta, & x &\in [-1,1],
%\end{align*}
%for fixed parameters $\alpha, \beta > -1$. The constant $c_J^{(\alpha,\beta)}$ is a normalization factor ensuring that $\mu$ is a probability measure, and has value equal to
%\begin{align*}
%  c_J^{(\alpha,\beta)} &= 2^{\alpha+\beta+1} B(\beta+1, \alpha+1), & B(\alpha,\beta) &= \frac{\Gamma(\alpha)\Gamma(\beta)}{\Gamma(\alpha+\beta)} = \int_0^1 t^{\alpha-1} (1-t)^{\beta-1} \dx{t},
%\end{align*}
%where $\Gamma$ is the Euler Gamma function. 
When circumstances are clear, we will write $\mu_J^{(\alpha,\beta)} = \mu$ to avoid notational clutter. We seek the distribution function of the induced measure, 
\begin{align}\label{eq:Pn-def}
  \mu_{J,n}^{(\alpha,\beta)}\left( [-1, x] \right) = \mu_n \left( [-1, x] \right) = F_n(x) &\coloneqq \int_{-1}^x p^2_n(t) \dx{\mu}(t).
\end{align}
%Clearly $\mu_n$ depends on $\alpha$ and $\beta$, but in most cases the parameter values are clear and so in this section we will write $\mu_n$ instead of the more cumbersome $\mu_{J,n}^{(\alpha,\beta)}$, and likewise for $F_n$.

To compute $F_n$, we specify an approximate median $x_0 = x_0\left(\mu, n\right)$ satisfying \eqref{eq:approximate-median}, and construct algorithmic procedures to evaluate $\widehat{F}_n(x) \approx F_n(x)$ (for $x \leq x_0$) and $\widehat{F}_n^c(x) \approx F_n^c(x)$ (for $x > x_0$). Having specified these, we use \eqref{eq:Fn-computation} to compute our approximation to $F_n$.

\subsubsection{Approximating $x_0(n)$}

With $\alpha$, $\beta > -1$ and $n \in \N$ fixed, consider the measure $\mu_J^{\left(\alpha/2n, \beta/2n\right)}$. Note that this measure is still a Jacobi measure since $\frac{\alpha}{2 n} > -1$ and $\frac{\beta}{2 n} > -1$. We may rewrite the integrand in \eqref{eq:Pn-def}
\begin{align*}
  p_n^2(t) \dx{\mu_J^{(\alpha,\beta)}}(t) = \left[ p_n(t) \left(\dx{\mu}_J^{(\alpha/2n, \beta/2n)}(t)\right)^n\right]^2.
\end{align*}
The quantity under the square brackets on the right-hand side is, in the language of potential theory, a weighed polynomial of degree $n$. One result in potential theory characterizes the ``essential" support of this weighted polynomial; in particular, the weighted polynomial decays quickly outside this support. The essential support is an interval, and we take the median $x_0$ of the induced measure $\mu_n$ to be the centroid of this interval.
%is a function whose ``essential" support is $[\theta - \Delta, \theta + \Delta]$. 

%the support of the weighted potential-theoretic equilibrium measure associated to the weight $\dx{\mu^{(\alpha/2n, \beta/2n)}}$ on $[-1,1]$. This support is 
The essential support of the weighted polynomial above is demarcated by the Mhaskar-Rakhmanov-Saff numbers for the asymmetric weight $\dx{\mu_J^{(\alpha/2n, \beta/2n)}}$ on $[-1,1]$. These numbers for this weight are computed explicitly in \cite[pp 206-207]{saff_logarithmic_1997}. When $\alpha, \beta \geq 0$, this support interval is $\left[ \theta - \Delta, \theta + \Delta \right]$, with
\begin{align*}
  \theta &= \frac{ \beta^2 - \alpha^2}{\left(2 n + \alpha + \beta\right)^2}, & 
  \Delta &= \frac{4 \sqrt{n(n + \alpha + \beta)(n + \alpha) (n + \beta)}}{(2 n + \alpha + \beta)^2}
\end{align*}
%The utility of this interval is that polynomials of degree $n$ weighted by $\left(\dx{\mu^{(\alpha/2n,\beta/2n)}}(x)\right)^n$ achieve their supremum on this interval and decay quickly to zero outside of this interval. This motivates the heuristic that the integrand in \eqref{eq:Pn-def}
%\begin{align*}
%  p_n^2(t) \dx{\mu^{(\alpha,\beta)}}(t) = \left[ p_n(t) \left(\dx{\mu}^{(\alpha/2n, \beta/2n)}(t)\right)^n\right]^2
%\end{align*}
%is a function whose ``essential" support is $[\theta - \Delta, \theta + \Delta]$. 
Since this is the interval where most of the ``mass" of the integral in \eqref{eq:Pn-def} lies, we set $x_0$ to be the centroid $\theta$ of this interval:
\begin{align}\label{eq:x0-jacobi}
  x_0\left(\mu^{(\alpha,\beta)},n\right) &= \frac{\beta^2 - \alpha^2}{\left(2 n + \alpha + \beta\right)^2}, & \alpha, \beta > -1, \;\;& n > 0.
\end{align}
Note that the definitions of $\theta, \Delta$ require $\alpha$ and $\beta$ to be non-negative. Without mathematical justification, we extend the formula \eqref{eq:x0-jacobi} to valid negative values of $\alpha, \beta$ as well. Figure \ref{fig:algorithm} compares $x_0$ and $F_n^{-1}\left(0.5\right)$ for certain choices of $\alpha$ and $\beta$.

\subsubsection{Computing $F_n(x)$}
First assume that $x \leq x_0$, with $x_0$ defined in \eqref{eq:x0-jacobi}, and define $A \in \N_0$ as 
\begin{align*}
  A &\coloneqq \left\lfloor |\alpha| \right\rfloor, & \alpha - A &\in (-1,1),
\end{align*}
where $\lfloor\cdot\rfloor$ is the floor function. We transform the integral \eqref{eq:Pn-def} over $[-1,x]$ onto the standard interval $[-1,1]$ via the substitution $u = \frac{2}{x+1} (t+1) - 1$:
\begin{align}\nonumber
  F_n(x) &= \frac{1}{c_J^{(\alpha,\beta)}} \int_{-1}^x p_n^2\left(t\right) (1 - t)^\alpha (1 + t)^\beta \dx{t} \\\nonumber
         &= \left(\frac{x+1}{2}\right)^{\beta+1} \frac{c_J^{(0,\beta)}}{c_J^{(\alpha,\beta)}} \int_{-1}^1 \left(2 - \frac{1}{2}(u+1)(x+1)\right)^{\alpha-A} U_{2n+A}(u) \dx{\mu^{(0,\beta)}(u),}
%         &= \left(\frac{x+1}{2}\right)^{\beta+1} \frac{1}{c_J^{(\alpha,\beta)}} \int_{-1}^1 p_n^2\left( \frac{1}{2}(u+1)(x+1) - 1\right) (1+u)^\beta \left(2 - \frac{1}{2}(u+1)(x+1)\right)^\alpha \dx{u} \\\label{eq:Fn-temp-jacobi}
%         &= \left(\frac{x+1}{2}\right)^{\beta+1} \frac{c_J^{(0,\beta)}}{c_J^{(\alpha,\beta)}} \int_{-1}^1 p_n^2\left( \frac{1}{2}(u+1)(x+1) - 1\right) \left(2 - \frac{1}{2}(u+1)(x+1)\right)^\alpha \dx{\mu^{(0,\beta)}(u)}
\end{align}
where $U_{2n+A}(u)$ is a degree-$(2n+A)$ polynomial given by 
\begin{align*}
  U_{2n+A}(u) &= p_n^2\left( \frac{1}{2}(u+1)(x+1) - 1\right) \left(2 - \frac{1}{2}(u+1)(x+1)\right)^A \\
              &= \gamma^2_n \left(\frac{x+1}{2}\right)^{2n+A} \underbrace{\prod_{k=1}^A \left[\left( \frac{3-x}{1+x}\right) - u \right]}_{(a)} \underbrace{\prod_{j=1}^n \left(u - \left( \frac{2}{x+1} \left( x_{j,n} + 1 \right) - 1\right) \right)^2}_{(b)},
\end{align*}
where $\left\{x_{j,n}\right\}_{j=1}^n$ are the $n$ zeros of $p_n(\cdot)$. Since we explicitly know the polynomial roots of $U_{2n+A}$, we can absorb the term marked $(a)$ into the measure $\dx{\mu}^{(0,\beta)}(u)$ via $A$ linear modifications, and we can likewise absorb the term $(b)$ via $n$ quadratic modifications. (See Appendix \ref{app:measure-modifications}.) Thus, define
\begin{align}\label{eq:modified-mu-def-jacobi}
  \dx{\widetilde{\mu}_n}(u) = U_{2n+A}(u) \dx{\mu}^{(0,\beta)}(u),
\end{align}
which is a modified measure whose recurrence coefficients can be computed via successive application of the linear and quadratic modification methods in Appendix \ref{app:measure-modifications}. Thus,
\begin{align}\label{eq:Fn-transformed}
  F_n(x) = \left(\frac{x+1}{2}\right)^{\beta+1} \frac{c_J^{(0,\beta)}}{c_J^{(\alpha,\beta)}} \int_{-1}^1 \left(2 - \frac{1}{2}(u+1)(x+1)\right)^{\alpha-A} \dx{\widetilde{\mu}_n}(u).
\end{align}
The integrand above has a root ($\alpha > 0$) or singularity ($\alpha < 0$) at $u = \frac{3-x}{x+1} = 1 + 2 \frac{1-x}{1+x} \geq 1 + 2 \frac{1-x_0}{1+x_0}$; this root is far outside the interval $[-1,1]$ unless $\beta$ is very large and both $n$ and $\alpha$ are small. The integrand is therefore a positive, monotonic, smooth function on $[-1,1]$, taking values between $1-x$ and $2$; we use an order-$M$ $\widetilde{\mu}_n$-Gaussian quadrature to efficiently evaluate it. With $\left(\widetilde{u}_m, \widetilde{w}_m\right)_{m=1}^M$ denoting the nodes and weights, respectively, of this quadrature rule, we compute
\begin{align}\label{eq:I-def}
  %I(x) &\coloneqq \int_{-1}^1 \left(2 - \frac{1}{2}(u+1)(x+1)\right)^a \dx{\widetilde{\mu}_n}(x), \\
  I(x) &\coloneqq \sum_{m=1}^M \widetilde{w}_m \left(2 - \frac{1}{2}\left(\widetilde{u}_m+1\right)(x+1)\right)^{\alpha-A}, \\
  \label{eq:Fn-I}
  \widehat{F}_n(x) &= \left(\frac{x+1}{2}\right)^{\beta+1} \frac{I(x)}{2^a (\beta + 1) B(\beta+1, \alpha+1)}
\end{align}
The entire procedure is summarized in Algorithm \ref{alg:Fn-Jacobi}.

In order to compute $\widehat{F}^c_n(x)$ for $x > x_0$, we use symmetry. Since $x \gets -x$ interchanges the parameters $\alpha$ and $\beta$, then
\begin{align*}
  \widehat{F}_n^{(\alpha,\beta),c}\left(x\right) = \int_x^1 \left[ p_n^{(\alpha,\beta)}(t) \right]^2 \dx{\mu^{(\alpha,\beta)}}(t) = \int_{-1}^{-x} \left[ p_n^{(\beta,\alpha)}(t)\right]^2 \dx{\mu}^{(\beta,\alpha)}(t) = \widehat{F}_n^{(\beta,\alpha)}(-x).
\end{align*}
Note that if $x > x_0\left(\mu^{(\alpha, \beta)}, n\right)$, then $-x < x_0 \left(\mu^{(\beta, \alpha)}, n\right)$. Thus, $\widehat{F}_n^c$ can be computed via the same algorithm for $\widehat{F}_n$, but with different values for $\alpha$, $\beta$, and $x$. This is also shown in Algorithm \ref{alg:Fn-Jacobi}.

%When $x > 0$, we can use symmetry to evaluate this integral: With $\mu^{(a,b)}$ the Jacobi (probability) measure with associated order-$n$ induced distribution $F_n^{(a,b)}$, we have:
%\begin{align*}
%  F_n^{(a,b)}(x) &= \int_{-1}^x \left[ p_n^{(a,b)}(t) \right]^2 \dx{\mu^{(a,b)}(t)} = 1 - \int_{x}^1 \left[ p_n^{(a,b)}(t) \right]^2 \dx{\mu^{(a,b)}(t)} \\
%                 &= 1 - \int_{-1}^{-x} \left[ p_n^{(b,a)}(t) \right]^2 \dx{\mu^{(b,a)}(t)} = 1 - F_n^{(b,a)}(-x)
%\end{align*}
%We therefore use the following procedure for evaluation of $F^{(a,b)}(x)$:
%\begin{align*}
%  F_n^{(a,b)}(x) = \left\{\begin{array}{rl} \textrm{Algorithm \ref{alg:Fn-Jacobi}}(a, b, n, x), & x \leq 0, \\
%1 - \textrm{Algorithm \ref{alg:Fn-Jacobi}}(b, a, n, -x), & x > 0. \end{array}\right.
%\end{align*}

\begin{algorithm2e}[H]%need H when used with beamer
\SetKwInOut{Input}{input}\SetKwInOut{Output}{output}
\Input{$\alpha, \beta > -1$: Jacobi polynomial parameters}
\Input{$n\in \N_0$ and $x \in [-1,1]$: Order of induced polynomial and measure $\mu_n$ and value $x$.}
\Input{$M \in \N$: Quadrature order for approximate computation of $F_n(x)$.}
\Output{$\widehat{F}_n\left(x; \mu^{(\alpha, \beta)}\right)$}
\BlankLine
If $x > x_0\left(\mu{(\alpha,\beta)}, n\right)$, return $1 - \widehat{F}_n(-x; \mu^{(\beta, \alpha)})$\;
Compute $n$ zeros, $\left\{x_{j,n}\right\}_{j=1}^n$ of $p_n = p_n\left(\cdot; \mu^{(\alpha,\beta)}\right)$, and leading coefficient $\gamma_n$ of $p_n$.\;
Compute recurrence coefficients $a_j$ and $b_j$ associated to $\mu^{(0,\beta)}$ for $0 \leq j \leq M + A + 2 n$.\;
\For{$j=1, \ldots, n$}{
  Quadratic measure modification \eqref{eq:quadratic-modification}: update $a_n$, $b_n$ for $n=0, \ldots, M + A + 2(n-j)$ with modification factor $\left(u - \left( \frac{2}{x+1} \left(x_{j,n} + 1\right) - 1\right)\right)^2$.\\
  %Update $a_n$, $b_n$ for $n=0, \ldots, M + A + 2(n-j)$: Use \eqref{eq:quadratic-modification} to modify measure by factor $\left(u - \left( \frac{2}{x+1} \left(x_{j,n} + 1\right) - 1\right)\right)^2$.\\
  %Scale $b_0 \gets b_0 \left(\frac{x+1}{2}\right)^2 \exp\left(\frac{1}{n} \log \gamma_n^2 \right)$.
  %Scale $b_0 \gets b_0 \left(\frac{x+1}{2}\right)^2 \gamma_n^{2/n}$.
}
\For{$k=1, \ldots, A$}{
  Use linear modification \eqref{eq:linear-modification} to update $a_n$, $b_n$ for $n=0, \ldots, M + (A-k)$ with modification factor $\left(u - \left(\frac{3-x}{1+x}\right)\right)$.\\
  %Scale $b_0 \gets b_0 \left(\frac{x+1}{2}\right)$.
}
$b_0 \gets b_0 \left(\frac{x+1}{2}\right)^{2n + A} \gamma_n^2$\\
Compute $M$-point Gauss quadrature $\left(\widetilde{u}_m, \widetilde{w}_m\right)_{m=1}^M$ associated with measure $\widetilde{\mu}_n$ via $\left\{\left(a_j, b_j\right)\right\}_{j=0}^M$.\\
Compute the integral $I$ in \eqref{eq:I-def}, and return $\widehat{F}_n\left(x; \mu^{(\alpha,\beta)}\right)$ given by \eqref{eq:Fn-I}.
\caption{Computation of $\widehat{F}_n(x)$, approximating $F_n(x)$ for $\mu$ corresponding to a Jacobi polynomial measure.}\label{alg:Fn-Jacobi}
\end{algorithm2e}

\begin{theorem}\label{thm:jacobi-Fn-estimate}
  With $\mu^{(\alpha,\beta)}$, $n \in \N$, and $x \in [-1,1]$ all given, assume that $x \leq x_0$ with $x_0$ as in \eqref{eq:x0-jacobi}. Then the output $\widehat{F}_n(x)$ from Algorithm \ref{alg:Fn-Jacobi} using an $M$-point quadrature rule satisfies
  \begin{align}\label{eq:jacobi-Fn-estimate}
    \left| F_n(x) - \widehat{F}_n(x) \right| \leq C(\alpha,\beta,n,M) \prod_{j=0}^M b_j\left(\widetilde{\mu}_n\right),
  \end{align}
  where $b_j\left(\widetilde{\mu}_n\right)$ are the $b_j$ three-term recurrence coefficients associated to the $x$-dependent measure $\widetilde{\mu}_n$ defined in \eqref{eq:modified-mu-def-jacobi}. The constant $C$ is
  \begin{align*}
    %C(\alpha,\beta,n,M) = \frac{2^{\beta+1}}{(\beta+1) B(\beta+1,\alpha+1)} \left( \frac{x_0(n) + 1}{4} \right)^{2 M+\beta} \binom{\alpha}{2 M}
    C(\alpha,\beta,n,M) = \frac{2^{\beta+1-A}}{(\beta+1) B(\beta+1,\alpha+1)} \left( \frac{x_0(n) + 1}{4} \right)^{2 M+\beta+1} %\binom{\alpha}{2 M}
  \end{align*}
\end{theorem}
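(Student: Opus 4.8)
\emph{Proof plan.} The idea is to identify $F_n(x)-\widehat F_n(x)$ with a single Gaussian quadrature error and to control it via the classical remainder formula. Set $\kappa(x)\coloneqq\bigl(\tfrac{x+1}{2}\bigr)^{\beta+1}\tfrac{c_J^{(0,\beta)}}{c_J^{(\alpha,\beta)}}$ and $g(u)\coloneqq\bigl(2-\tfrac12(u+1)(x+1)\bigr)^{\alpha-A}$. Combining \eqref{eq:Fn-transformed}, \eqref{eq:I-def} and \eqref{eq:Fn-I} gives $F_n(x)=\kappa(x)\int_{-1}^1 g\,\mathrm{d}\widetilde{\mu}_n$ and $\widehat F_n(x)=\kappa(x)\sum_{m=1}^M\widetilde w_m\,g(\widetilde u_m)$, so $|F_n(x)-\widehat F_n(x)|=\kappa(x)\,|E_M[g]|$, where $E_M$ is the remainder of the $M$-point $\widetilde{\mu}_n$-Gauss rule. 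A preliminary observation is that $g\in C^\infty$ on a neighborhood of $\operatorname{supp}\widetilde{\mu}_n\subseteq[-1,1]$: the base $2-\tfrac12(u+1)(x+1)$ vanishes only at $u=\tfrac{3-x}{x+1}$, which is strictly larger than $1$ precisely because $x\le x_0<1$; hence the Gauss remainder formula is available.

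I would then invoke (or re-derive via the degree-$(2M-1)$ Hermite interpolant of $g$ at the doubled Gauss nodes, which are the zeros of the monic degree-$M$ orthogonal polynomial $\pi_M$ of $\widetilde{\mu}_n$) the estimate
\[
  |E_M[g]|\;\le\;\frac{1}{(2M)!}\Bigl(\max_{\xi\in[-1,1]}\bigl|g^{(2M)}(\xi)\bigr|\Bigr)\int_{-1}^1\pi_M^2\,\mathrm{d}\widetilde{\mu}_n .
\]
By \eqref{eq:leading-coefficient}, the monic norm satisfies $\int_{-1}^1\pi_M^2\,\mathrm{d}\widetilde{\mu}_n=\gamma_M(\widetilde{\mu}_n)^{-2}=\prod_{j=0}^M b_j(\widetilde{\mu}_n)$, which is exactly the product on the right-hand side of \eqref{eq:jacobi-Fn-estimate}; this factor carries the genuine $x$-dependence of the bound and I would leave it untouched.

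It remains to bound $\kappa(x)\,\tfrac{1}{(2M)!}\max_{[-1,1]}|g^{(2M)}|$ by $C(\alpha,\beta,n,M)$. Writing $g=h^{\alpha-A}$ with $h(u)\coloneqq 2-\tfrac12(u+1)(x+1)$ affine, one computes $g^{(2M)}(\xi)=\bigl(\prod_{i=0}^{2M-1}(\alpha-A-i)\bigr)\,h(\xi)^{\alpha-A-2M}\bigl(\tfrac{x+1}{2}\bigr)^{2M}$. Because $\alpha-A\in(-1,1)$ we have $|\alpha-A-i|<i+1$ for every $i\ge 0$, so the falling-factorial prefactor has modulus at most $(2M)!$; on $[-1,1]$ the affine $h$ ranges over $[\,1-x,\,2\,]$, and since the exponent $\alpha-A-2M$ is negative, $h(\xi)^{\alpha-A-2M}\le(1-x)^{\alpha-A-2M}$. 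Using also $\tfrac{c_J^{(0,\beta)}}{c_J^{(\alpha,\beta)}}=\bigl(2^\alpha(\beta+1)B(\beta+1,\alpha+1)\bigr)^{-1}$, all $x$-dependence collects into the factors $\bigl(\tfrac{x+1}{2}\bigr)^{2M+\beta+1}$ and $(1-x)^{\alpha-A-2M}$, both increasing in $x$ on $[-1,x_0]$; evaluating them at $x=x_0$ and reducing the surviving powers of $2$ and the normalization constant is intended to produce precisely $C(\alpha,\beta,n,M)$, multiplying $\prod_{j=0}^M b_j(\widetilde{\mu}_n)$ as claimed.

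The main obstacle is this last step: carrying the exact powers of $(x+1)$, $(1-x)$ and $2$ through $\kappa(x)$, the Jacobi normalization constants, and the closed form of $g^{(2M)}$ so that the substitution $x\mapsto x_0$ reproduces the stated constant, and checking that this substitution is a valid upper bound uniformly over $x\le x_0$ — which is where the inequality $x_0<1$ (implicit in the hypothesis, and needed to keep the branch well-posed) is used. One should also note in passing that when $\alpha-A<0$ the integrand $g$ is merely integrably singular at $\tfrac{3-x}{x+1}>1$ while remaining real-analytic on $[-1,1]$, so none of the derivative estimates above are affected; everything structural is contained in the Gauss remainder identity and the elementary power-function derivative bound.
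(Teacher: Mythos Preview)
Your plan is exactly the paper's: write $F_n-\widehat F_n$ as a single $\widetilde\mu_n$-Gauss quadrature error for $g(u)=\bigl(2-\tfrac12(u+1)(x+1)\bigr)^{\alpha-A}$, apply the classical mean-value remainder, identify $\int_{-1}^1\pi_M^2\,\dx{\widetilde\mu_n}=\prod_{j=0}^M b_j(\widetilde\mu_n)$, compute $g^{(2M)}$ explicitly, and use $\prod_{i=0}^{2M-1}|\alpha-A-i|\le(2M)!$ from $\alpha-A\in(-1,1)$. All of that matches the paper line for line.

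The single divergence is the step you correctly flag as the obstacle. You bound $h(\xi)^{\alpha-A-2M}\le(1-x)^{\alpha-A-2M}$ using the \emph{minimum} of $h(u)=2-\tfrac12(u+1)(x+1)$ on $[-1,1]$, which is the right direction because the exponent $\alpha-A-2M$ is negative for every $M\ge1$. The paper, by contrast, replaces $h(\xi)^{\alpha-A-2M}$ by $2^{\alpha-A-2M}$, i.e.\ uses the \emph{maximum} of $h$; that substitution is precisely what collapses the constants to the stated $C(\alpha,\beta,n,M)$. With your bound the bookkeeping does \emph{not} reduce to that $C$: after evaluating at $x=x_0$ you are left with $C(\alpha,\beta,n,M)\cdot\bigl(\tfrac{1-x_0}{2}\bigr)^{\alpha-A-2M}$, and this extra factor exceeds $1$. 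Since for a negative exponent the inequality $h(\xi)^{\alpha-A-2M}\le 2^{\alpha-A-2M}$ would require $h(\xi)\ge 2$ (whereas $h\le 2$ on $[-1,1]$), the discrepancy originates in the paper's derivative estimate rather than in your plan. Your route yields a valid bound of the same structure but with a larger constant; do not expect it to reproduce the stated $C$ exactly.
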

Note that $C(\alpha,\beta,n,M)$ on the right-hand side of \eqref{eq:jacobi-Fn-estimate} is explicitly computable once $\alpha$, $\beta$, $M$, and $n$ are fixed, and only the product involving $b_j\left(\widetilde{\mu}_n\right)$ depends on $x$. Furthermore, this last quantity is explicitly computed in Algorithm \ref{alg:Fn-Jacobi}, so that a rigorous error estimate for the algorithm can be computed before its termination.

Since $\frac{x_0+1}{4} < \frac{1}{2}$, then the estimate \eqref{eq:jacobi-Fn-estimate} also hints at exponential convergence of the quadrature rule strategy, assuming that the factors $b_j\left(\widetilde{\mu}_n\right)$ can be bounded or controlled. We cannot provide these bounds, although we do know the asymptotic behavior $b_j\left(\widetilde{\mu}_n\right) \rightarrow \frac{1}{4}$ as $j \rightarrow \infty$ \cite[Remark 3.1.10]{nevai_orthogonal_1980}. Also, since $x_0 \in [-1,1]$ for any $n > 0$ then, uniformly in $n > 0$, $C(\alpha, \beta, n, M) \leq C'(\alpha, \beta) 4^{-M}$.  Thus, for a fixed $x$ we expect that the estimate in Theorem \ref{thm:jacobi-Fn-estimate} behaves like
\begin{align*}
  \left| F_n(x) - \widehat{F}_n(x) \right| \leq C(\alpha,\beta,n,M) \prod_{j=0}^M b_j\left(\widetilde{\mu}_n\right) \lesssim C''(\alpha, \beta, x) 4^{-2M},
\end{align*}
showing exponential convergence with respect to $M$. However, we cannot prove this latter statement.
%Note that the bound above is explicitly computable before termination of Algorithm \ref{alg:Fn-Jacobi}: $C$ is computable at the outset, and $\prod_{j=0}^M b_j(\widetilde{\mu}_n)$ is computable after the measure modification portion of the algorithms. 
\begin{proof}
  The result is a relatively straightforward application of known error estimates for Gaussian quadrature with respect to non-classical weights. We use the notation of Algorithm \ref{alg:Fn-Jacobi}: $\widetilde{u}_m$ and $\widetilde{w}_m$ denote the $M$-point $\widetilde{\mu}_n$-Gaussian quadrature nodes and weights, respectively. We start with the Corollary to Theorem 1.48 in \cite{gautschi_orthogonal_2004}, stating that if $f(\cdot)$ is infinitely differentiable on $[-1,1]$, then 
  \begin{align*}
    \left| \int_{-1}^1 f(u) \dx{\widetilde{\mu}_n(u)} - \sum_{j=1}^M \widetilde{w}_m f\left(\widetilde{u}_m\right) \right| = \frac{f^{(2M)}(\tau)}{(2 M)!} \int_{-1}^1 \prod_{j=1}^M \left( u - \widetilde{u}_m\right)^2 \dx{\widetilde{\mu}_n}(u)
  \end{align*}
  for some $\tau \in (-1,1)$. Noting that since $\widetilde{u}_m$ are the zeros of the degree-$M$ $\widetilde{\mu}_n$-orthogonal polynomial, then 
  \begin{align*}
    \int_{-1}^1 \prod_{j=1}^M \left( u - \widetilde{u}_m\right)^2 \dx{\widetilde{\mu}_n}(u) = \prod_{j=0}^M b_j\left(\widetilde{\mu}_n\right) \int_{-1}^1 p^2_M\left(u; \widetilde{\mu}_n\right) \dx{\widetilde{\mu}_n}(u) = \prod_{j=0}^M b_j \left(\widetilde{\mu}_n\right).
  \end{align*}
  From \eqref{eq:Fn-transformed}, the integral we wish to approximate has integrand $f(u) = \left(2 - \frac{1}{2} (u+1)(x+1)\right)^{\alpha-A}$. Then for any $\tau \in (-1,1)$ and any $x \leq x_0$,
  \begin{align*}
    \left| f^{(2M)}(\tau) \right| &= \left(\frac{x+1}{2}\right)^{2 M} \left(2 - \frac{1}{2} (x+1)(\tau+1) \right)^{\alpha - A - 2 M} \prod_{j=0}^{2M-1} |\alpha - A - j| \\
                                  &\leq \left(\frac{x_0+1}{2}\right)^{2 M} 2^{\alpha - A - 2 M} \prod_{j=0}^{2M-1} |\alpha - A - j| = 2^{\alpha-A} \left(\frac{x_0+1}{4} \right)^{2M} \prod_{j=0}^{2M-1} |\alpha - A - j|
  \end{align*}
  Then
  \begin{align*}
    \left| F_n(x) - \widehat{F}_n(x) \right| &= \left(\frac{x+1}{2}\right)^{\beta+1} \frac{c_J^{(0,\beta)}}{c_J^{(\alpha,\beta)}} \left| \int_{-1}^1 f(u) \dx{\widetilde{\mu}_n(u)} - \sum_{j=1}^M \widetilde{w}_m f\left(\widetilde{u}_m\right) \right| \\
                                             &\leq \left(\frac{x+1}{2}\right)^{\beta+1} \frac{c_J^{(0,\beta)}}{c_J^{(\alpha,\beta)}} 2^{\alpha-A} \left(\frac{x_0+1}{4} \right)^{2M} \frac{\prod_{j=0}^{2M-1} |\alpha - A - j|}{(2 M)!} \prod_{j=0}^M b_j \left(\widetilde{\mu}_n\right) \\
                                             %&= 2^{\alpha + \beta + 1} \frac{c_J^{(0,\beta)}}{c_J^{(\alpha,\beta)}} \left(\frac{x_0 +1}{4} \right)^{2 M + \beta + 1} \binom{\alpha}{2M} \prod_{j=0}^M b_j\left(\widetilde{\mu}_n\right)
  \end{align*}
  Since $\alpha - A \in (-1,1)$, then 
  \begin{align*}
    \left| F_n(x) - \widehat{F}_n(x) \right| &\leq 2^{\alpha + \beta + 1-A} \frac{c_J^{(0,\beta)}}{c_J^{(\alpha,\beta)}} \left(\frac{x_0+1}{4} \right)^{2M+\beta+1} \prod_{j=0}^M b_j \left(\widetilde{\mu}_n\right) 
  \end{align*}
  The result follows by direct computation of $2^{\alpha + \beta + 1-A} \frac{c_J^{(0,\beta)}}{c_J^{(\alpha,\beta)}}$.
\end{proof}
We verify spectral convergence of the scheme empirically in Figure \ref{fig:jacobi-error}, which also illustrates that for the test cases shown, one could choose $M$ to bound errors uniformly in $x$. The figure also shows that qualitative error behavior is uniform even for extremely large values of $n$ and/or $\alpha$ or $\beta$. Based on these results, taking $M=10$ appears sufficient uniformly over all $(\alpha, \beta, n)$.\footnote{Not shown: we have verified this for numerous values of $(\alpha, \beta, n)$.} Finally, extending the estimate in \eqref{eq:jacobi-Fn-estimate} to the case $x > x_0$ can be accomplished by permuting $\alpha$ and $\beta$ as is done for that case in Algorithm \ref{alg:Fn-Jacobi}. 
\begin{figure}
  \begin{center}
    \includegraphics[width=0.49\textwidth]{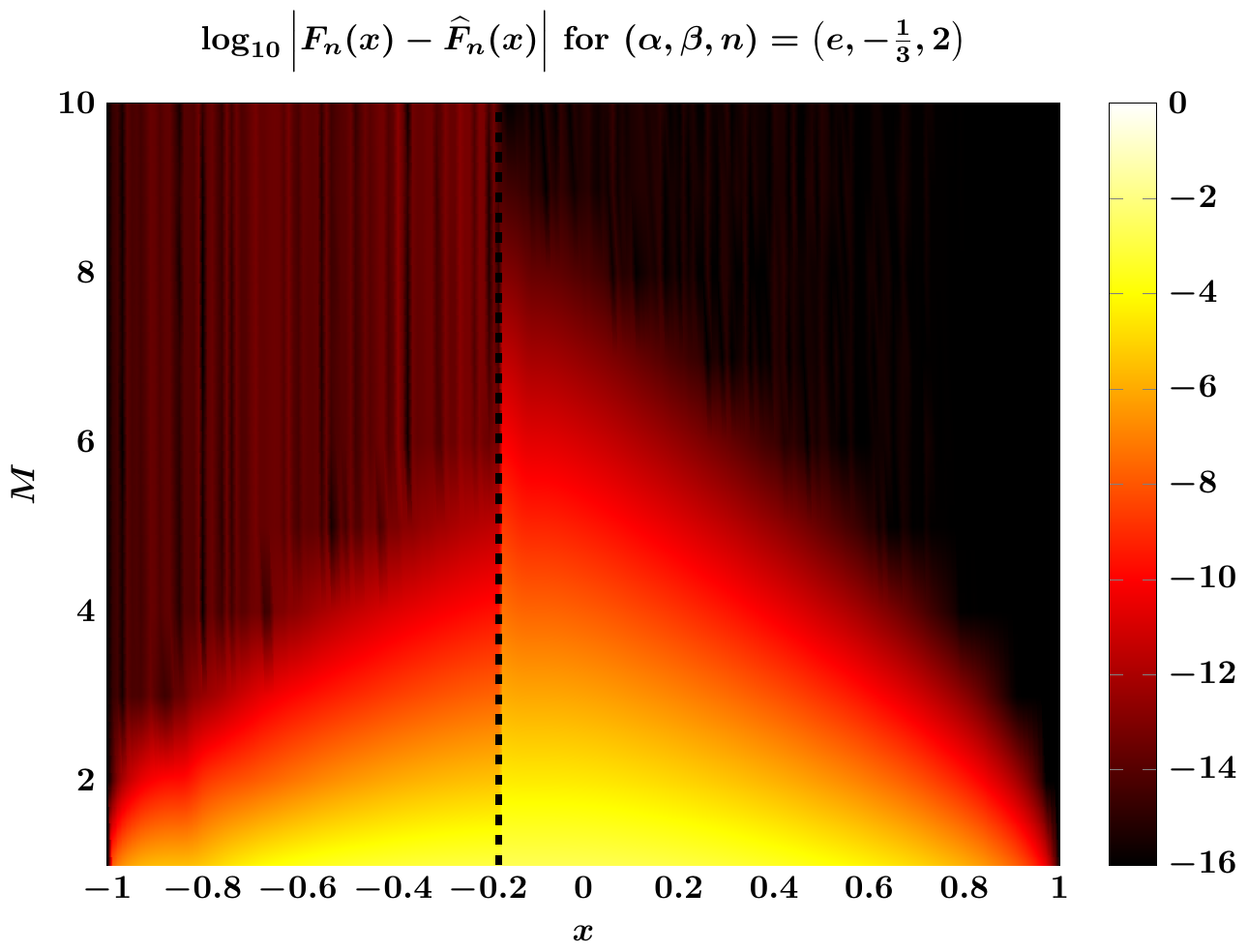}
    \includegraphics[width=0.49\textwidth]{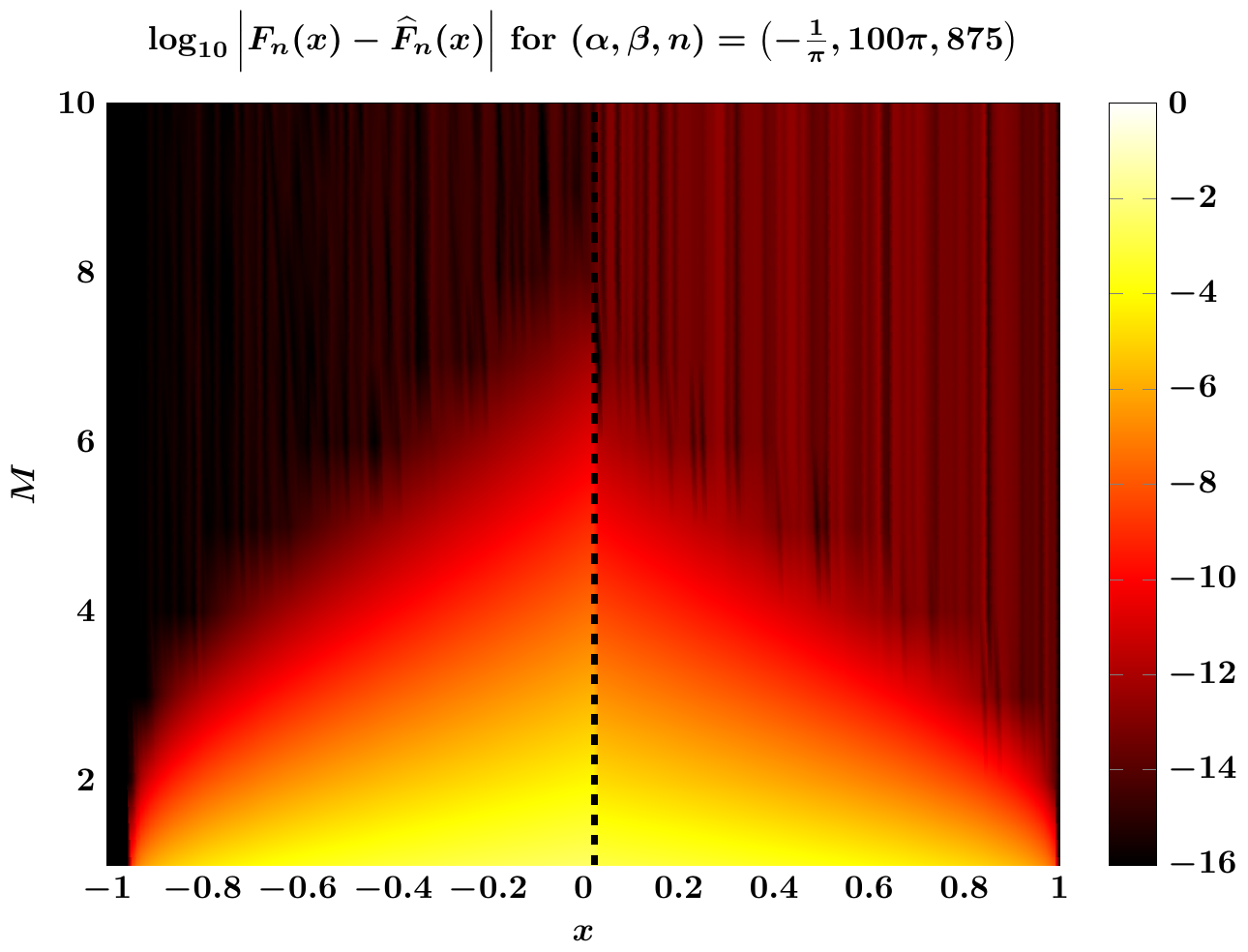}
  \end{center}
  \caption{Color plot of $\log_{10} \left| F_n(x) - \widehat{F}_n(x) \right|$ for Jacobi measures with certain choices of the order $n$ and various quadrature size $M$. Left: $(\alpha, \beta, n) = \left(e, -\frac{1}{3}, 2\right)$. Right: $(\alpha, \beta, n) = \left( -\frac{1}{\pi}, 100\pi, 875\right)$. In both figures the vertical dashed black line indicates $x_0(n)$. For these test cases, we obtain more than 10 digits of accuracy with only $M=10$, uniformly in $x$.}\label{fig:jacobi-error}
\end{figure}

\subsection{Half-line Freud weights}
In this section we consider the half-line Freud measure $\mu^{(\alpha,\rho)}_{HF}$ as defined in Table \ref{tab:measures}. These algorithms require the recurrence coefficients for $\mu_{H F}$; these coefficients are in general not easy to compute when $\alpha \neq 1$. We show in Appendix \ref{app:recurrence-coefficients} that these recurrence coefficients can be determined from the recurrence coefficients for Freud weights, but recurrence coefficients for Freud weights are themselves relatively difficult to tabulate \cite{magnus_freuds_1999,assche_discrete_2005}. In our computations we use the methodology of \cite{gautschi_variable-precision_2009} to compute Freud weight recurrence coefficients (and hence half-line Freud coefficients). We note that the methodology of \cite{gautschi_variable-precision_2009} is computationally onerous: For a fixed $\alpha$ and $\rho$, it required a day-long computation to obtain 500 recurrence coefficients.

%  \item (Appendix \ref{app:recurrence-coefficients}) Computation of recurrence coefficients for Freud and generalized Freud weights. We describe a procedure that uses only Freud weight recurrence coefficients to compute induced polynomial distribution functions for \textit{both} the Freud and half-Freud cases. The procedure we use to compute Freud weight recurrence coefficients is exactly that proposed in \cite{gautschi_set_1993}. We present one theoretical statement that provides the connection between Freud and half-Freud weights. This theoretical result is standard and well-known to analysts, but to our knowledge appears to have not yet been exploited in the numerical algorithms setting.

%Let $\mu_{HF}$ be a measure on $[0, \infty)$ with density
%\begin{align}\label{eq:generalized-freud-weight}
%  \dx{\mu_{HF}^{(\alpha,\rho)}}(x) &= \frac{1}{c_{HF}^{\left(\alpha,\rho\right)}} x^{\rho} \exp\left(-x^\alpha\right), & x &\geq 0,
%\end{align}
%with parameters $\rho > -1$ and $\alpha > \frac{1}{2}$, and 
%\begin{align}\label{eq:generalized-freud-normalization}
%  c_{HF}^{(\alpha,\rho)} = \frac{1}{\alpha} \Gamma\left(\frac{\rho+1}{\alpha}\right)
%\end{align}
Again in this subsection we write $\mu^{(\alpha,\rho)}_{HF} = \mu$ and similarly for $\mu_n$, $F$, $F_n$, etc. We restore these super- and subscripts when ambiguity arises without them.

We accomplish computation of $F_n$ for this measure with largely the same procedure as for Jacobi measures. Like in the Jacobi case, the details of the procedure we use differ depending on whether $x$ is closer to the left-hand end of $\supp \mu$ (which is $x=0$ here), or to the right-hand end of the $\supp \mu$ (which is $x=\infty$). We determine this delineation again by means of potential theory. 

\subsubsection{Computation of $x_0$}
As with the Jacobi case, we take $x_0$ to be the midpoint of the ``essential" support for $p_n^2(x) \dx{\mu}_{HF}^{(\alpha,\rho)}$, the latter of which is approximately the support of the weighted equilibrium measure associated to $\left[\dx{\mu}_{HF}^{\left(\alpha, \rho\right)}\right]^{1/2n}$. However, directly computing the support of this equilibrium measure is difficult. Thus, we resort to a more \textit{ad hoc} approach.

To derive our approach, we first compute the exact support of special cases of our Half-line Freud measures. 
\begin{itemize}
    \item The support of the weighted equilibrium measure associated to the measure
  \begin{align}\label{eq:alph-1-special-case}
    \dx{\mu}(x) &= x^s \exp(-\lambda x), & x \in [0, \infty), & s\geq 0 &\; \lambda > 0
  \end{align}
  is the interval $[\theta - \Delta, \theta + \Delta]$, with these values given by \cite{saff_logarithmic_1997} 
  \begin{align*}
    \theta &= \frac{s+1}{\lambda}, & \Delta &= \frac{\sqrt{2s+1}}{\lambda}
  \end{align*}
  This interval is the ``essential" support for any function of the form $p_n(x) \left(\dx{\mu}(x)\right)^n$ where $p_n$ is a degree-$n$ polynomial.
  \item The second special case is for arbitrary $\alpha$, but $\rho = 0$. The support of the weighted equilibrium measure for $\sqrt{\dx{\mu}_{HF}^{(\alpha,0)}}$ in this case is the interval $[0, k_n(\alpha)]$, where 
  \begin{align*}
    k_n\left(\alpha\right) = k_n \coloneqq n^{1/\alpha} \left(\frac{2 \sqrt{\pi} \Gamma(\alpha)}{\Gamma\left(\alpha + \frac{1}{2}\right)}\right)^{1/\alpha}. 
  \end{align*}
  are the Mhaskar-Rakhmanov-Saff numbers for $\sqrt{\mu_{HF}^{(\alpha,0)}}$ \cite{mhaskar_where_1985}. 
\end{itemize}

We now derive our approximation for the case of general $\alpha$, $\rho$. To approximate where $p_n \sqrt{ \dx{\mu}_{HF}^{(\alpha,\rho)}}$ is supported, consider 
\begin{align*}
  p_n(x) \left( \dx{\mu}_{HF}^{\left(\alpha, \rho\right)}(x) \right)^{1/2} &\propto p_n(x) \left[ x^{\rho/2n} \exp\left(-\frac{1}{2n} x^\alpha \right) \right]^{n} \\
                                                                        &\stackrel{u = x^\alpha}{=} p\left(u^{1/\alpha}\right) \left[ u^{\rho/2 n \alpha} \exp\left(-\frac{1}{2n} u\right) \right]^{n} \\
                                                                        &= q_{n/\alpha}(u) \left[ u^{\rho/2n} \exp\left(-\frac{\alpha}{2 n} u \right) \right]^{n/\alpha},
\end{align*}
where we have introduced $q_{n/\alpha}$, which is a ``polynomial" of ``degree" $n/\alpha$\footnote{More formally, it is a potential with ``mass" $n/\alpha$.}. Note that in the variable $u$, the weight function under square brackets in the last expression is of the form \eqref{eq:alph-1-special-case}. Concepts in potential theory extend to generalized notions of polynomial degree, and so we may apply our formulas for $\theta$ and $\Delta$ with $s = \frac{\rho}{2n}$ and $\lambda = \frac{\alpha}{2 n}$. These formulas imply that the ``essential" support for the $u$ variable is
\begin{align*}
  \theta - \Delta \leq u = x^{\alpha} \leq \theta + \Delta
\end{align*}
Therefore, to obtain appropriate limits on the variable $x$, we raise the endpoints $\theta \pm \Delta$ to the $1/\alpha$ power. However, we now require a correction factor. To see why, we compute the right-hand side of our computed support interval when $\rho = 0$
\begin{align}\label{eq:temp-support-rhs}
  \left(\theta + \Delta\right)^{1/\alpha} = \left( 2n + 2 \sqrt{ n^2} \right)^{1/\alpha} = 2^{2/\alpha} n^{1/\alpha},
\end{align}
and compare this with the exact value $k_n(\alpha)$ computed above. We note that while $k_n(\alpha) \sim n^{1/\alpha}$ matches the $n$-behavior of \eqref{eq:temp-support-rhs}, the constant is wrong. We thus multiply the endpoints $\left( \theta \pm \Delta\right)^{1/\alpha}$ by the appropriate constant to match the $\rho = 0$ behavior of $k_n(\alpha)$. The net result then, for arbitrary $\alpha, \rho$, is the approximation
\begin{subequations}\label{eq:half-freud-interval}
  \begin{align}\label{eq:half-freud-amin}
    a_{\pm}\left(n, \alpha, \rho\right) &= \left( \frac{\sqrt{\pi} \Gamma(\alpha)}{2 \Gamma\left(\alpha + \frac{1}{2}\right)}\right)^{1/\alpha} \left( \rho + 2 n \pm 2 \sqrt{n^2 + n \rho} \right)^{1/\alpha} \\\label{eq:x0-half-freud}
    x_0\left(n;\mu_{HF}^{(\alpha,\rho)}\right) &= \frac{1}{2} \left( a_{-}\left(n, \alpha, \rho\right) + a_{+}\left(n, \alpha, \rho\right)\right)
  %x_0\left(n;\mu_{HF}^{(\alpha,\rho)}\right) = \frac{1}{2} \left( \frac{\sqrt{\pi} \Gamma(\alpha)}{2 \Gamma\left(\alpha + \frac{1}{2}\right)}\right)^{1/\alpha} \left[ \left( \rho + 2 n - 2 \sqrt{n^2 + n \rho} \right)^{1/\alpha} + \left( \rho + 2 n + 2 \sqrt{n^2 + n \rho} \right)^{1/\alpha} \right].
\end{align}
\end{subequations}
Figure \ref{fig:half-freud-mrs} compares the intervals demarcated by $a_{-}$ and $a_{+}$ versus $F_n^{-1}\left([0.01, 0.99]\right)$, the latter of which contains ``most" of the support of $F_n$.
\begin{figure}
  \begin{center}
    \includegraphics[width=0.49\textwidth]{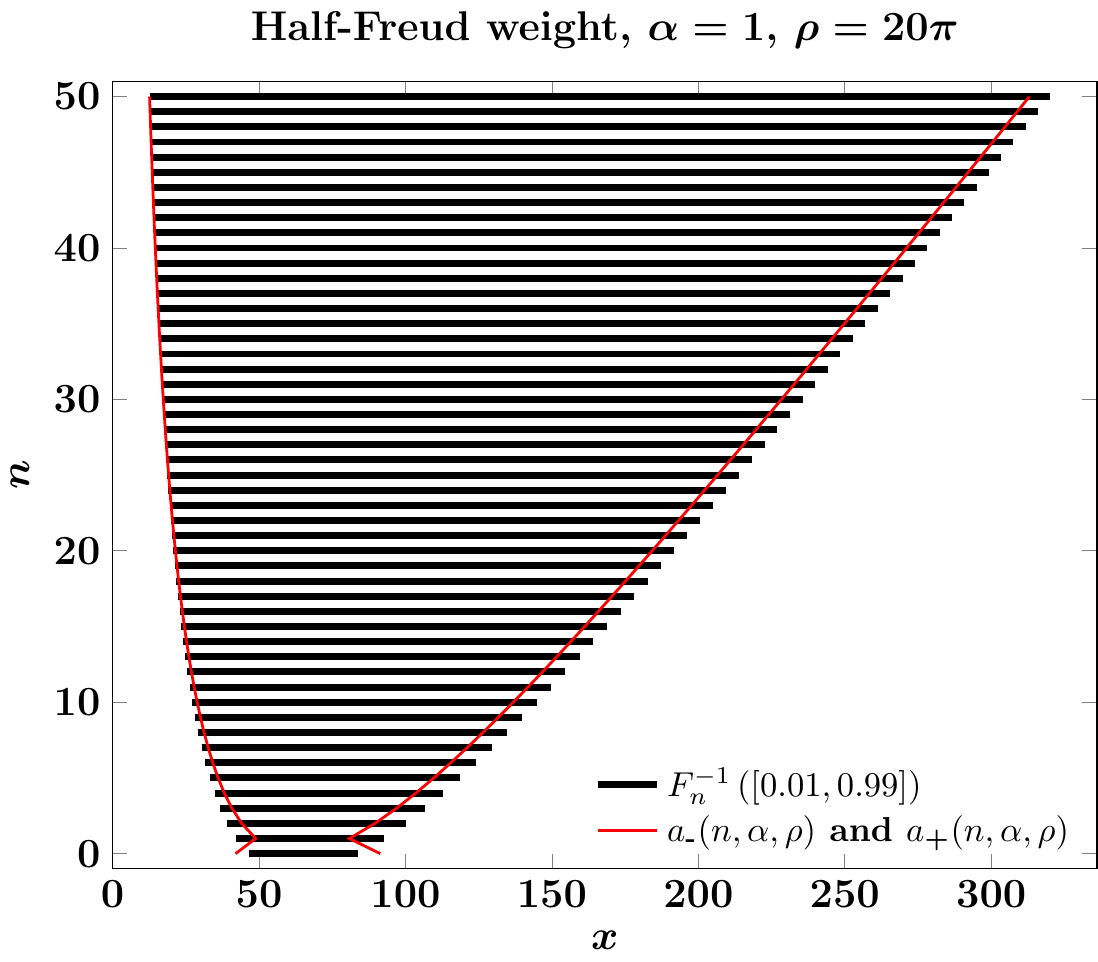}
    \includegraphics[width=0.49\textwidth]{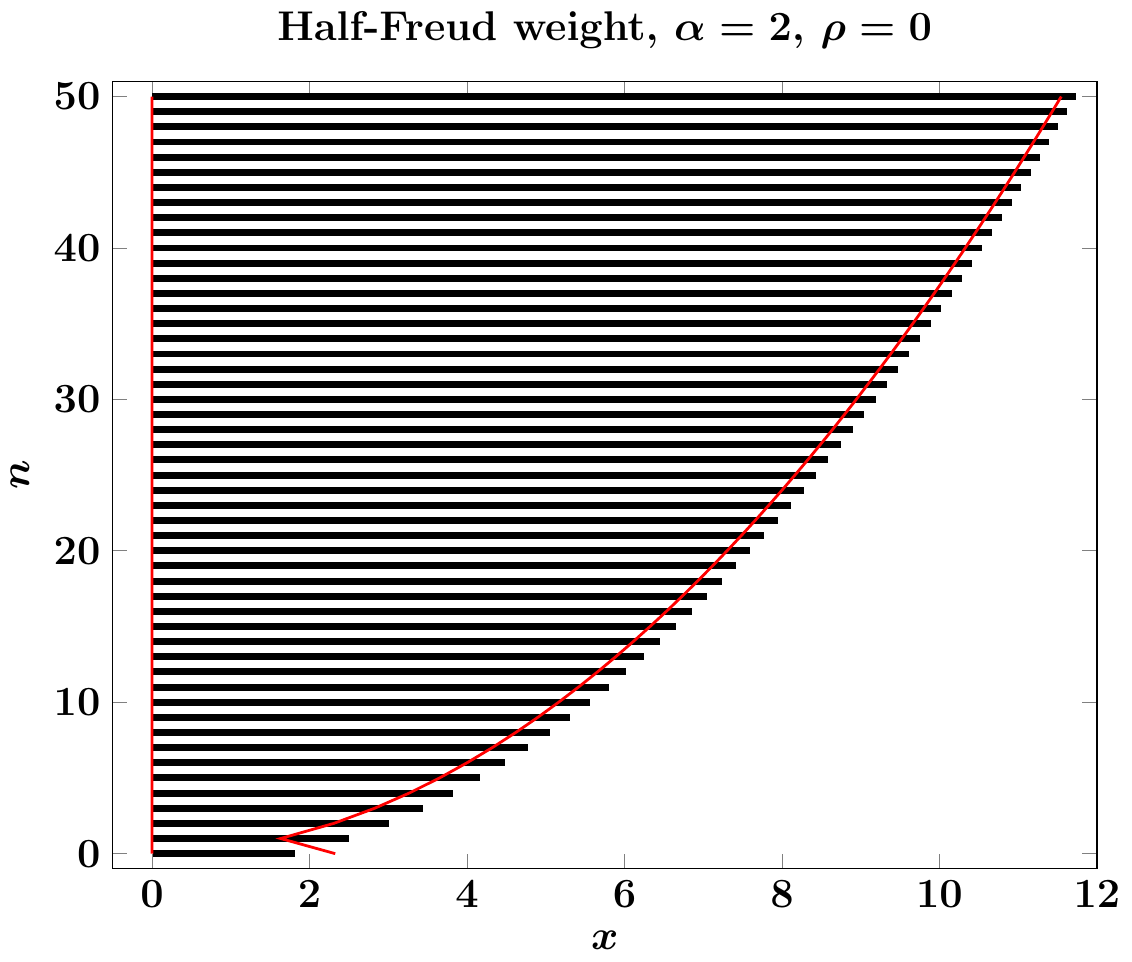}
  \end{center}
  \caption{Depiction of the interval $F_n^{-1}\left([0.01, 0.99]\right)$ containing ``most" of the support of $F_n(x)$, for various $n$ in the Half-Freud case. This interval is compared against the potential-theoretic demarcations \eqref{eq:half-freud-amin}. Good agreement indicates that $a_{\pm}$ defined in \eqref{eq:half-freud-interval} are reasonably accurate approximations for the bulk support of $F_n$.}\label{fig:half-freud-mrs}
\end{figure}

\subsubsection{Computing $\widehat{F}_n(x)$}
First assume that $x \leq x_0$. Then 
\begin{align*}
  F_n(x) &= \frac{1}{c_{HF}^{(\alpha,\rho)}} \int_0^x p_n^2(t) t^\rho \exp\left(-t^\alpha\right) \dx{t} \\
         &\leftstackrel{u = \frac{2 t}{x} - 1}{=} \left(\frac{x}{2}\right)^{\rho+1} \frac{1}{c_{HF}^{(\alpha,\rho)}} \int_{-1}^1 \exp\left(-\left(\frac{x}{2}\right)^\alpha \left(u+1\right)^\alpha\right) p_n^2\left(\frac{x}{2}\left(1+u\right)\right) (1+u)^\rho \dx{u}.
\end{align*}
We recognize a portion of the integrand as a Jacobi measure, and use successive measure modifications to define $\widetilde{\mu}_n$:
\begin{subequations}
\begin{align}\nonumber
  \dx{\widetilde{\mu}_n}(u) &\coloneqq p_n^2\left(\frac{x}{2}\left(1+u\right)\right) \dx{\mu_J}^{(0,\rho)} \\\label{eq:I-def-gfreud-left}
  %I(x) &\coloneqq \int_{-1}^1 \exp\left(-\left(\frac{x}{2}\right)^\alpha \left(u+1\right)^\alpha\right) \dx{\widetilde{\mu}_n^{(0,\rho)}(u)} \\\label{eq:Fn-I-gfreud-left}
  I(x) &\coloneqq \sum_{m=1}^M \widetilde{w}_m \exp\left(-\left(\frac{x}{2}\right)^\alpha \left(u_m+1\right)^\alpha\right)\\\label{eq:Fn-I-gfreud-left}
  \widehat{F}_n(x) &= \left(\frac{x}{2}\right)^{\rho+1} \frac{c_J^{(0,\rho)}}{c_{HF}^{(\alpha,\rho)}} I(x)
\end{align}
\end{subequations}
The recurrence coefficients of $\widetilde{\mu}_n$ can be computed via polynomial measure modifications on the roots $u_{j,n} = \frac{2 x_{j,n}}{x} - 1$, where $x_{j,n}$ are the roots of $p_n\left(\cdot\right)$. A detailed algorithm is given in Algorithm \ref{alg:Fn-GFreud-left}.

\begin{algorithm2e}[H]%need H when used with beamer
\SetKwInOut{Input}{input}\SetKwInOut{Output}{output}
\Input{$\alpha > \frac{1}{2}, \rho > -1$: half-line Freud weight parameters}
\Input{$n\in \N_0$ and $x \geq 0$: Order of induced measure $\mu_n$ and value $x$.}
\Input{$M \in \N$: Quadrature order for approximate computation of $F_n(x)$.}
\Output{$\widehat{F}_n(x)$}
\BlankLine
Compute $n$ zeros, $\left\{x_{j,n}\right\}_{j=1}^n$ of $p_n = p_n\left(\cdot; \mu^{(a,\rho)}\right)$, and leading coefficient $\gamma_n$ of $p_n$.\;
Compute recurrence coefficients $a_j$ and $b_j$ associated to $\mu_J^{(0,\rho)}$ for $0 \leq j \leq M + 2 n$.\;
\For{$j=1, \ldots, n$}{
  Quadratic measure modification \eqref{eq:quadratic-modification}: update $a_n$ and $b_n$ for $n = 0, \ldots, M + 2(n-j)$ with modification factor $\left(u - \left(\frac{2 x_{j,n}}{x} - 1\right) \right)^2$.\\
  %Use quadratic modification \eqref{eq:quadratic-modification} to update $a_n$ and $b_n$ for $n = 0, \ldots, M + 2(n-j)$.\\
  %Update recurrence coefficients $a_n$ and $b_n$ for $n = 0, \ldots, M + 2(n-j)$ via modification:\\
  %\hskip 10pt If $u_{j,n} = \frac{2 x_{j,n}}{x} - 1 \in [-1,1]$, then use quadratic modification \eqref{eq:quadratic-modification}.\\
  %\hskip 10pt If $u_{j,n} \not\in [-1,1]$, then use twice-application of linear modification \eqref{eq:linear-modification}.\\
  %Algorithm \ref{alg:quadratic-modification}: use quadratic modification at root $u_{j,n} = \frac{2 x_{j,n}}{x} - 1$ to update $a_k$ and $b_k$ for $0 \leq k \leq M + 2(n-j)$.\;
  Scale $b_0 \gets b_0 \exp\left(\frac{1}{n} \log \gamma_n^2 \right)$.
}
%Compute $M$-point Gauss quadrature associated with measure $\widetilde{\mu}^{(0,\rho)}_n$ via $\left\{\left(a_j, b_j\right)\right\}_{j=0}^M$.\;
%Approximate the integral $I$ in \eqref{eq:I-def-gfreud-left} with the $M$-point quadrature.\;
%Evaluate \eqref{eq:Fn-I-gfreud-left}.\;
Compute $M$-point Gauss quadrature $\left(\widetilde{u}_m, \widetilde{w}_m\right)_{m=1}^M$ associated with measure $\widetilde{\mu}_n$ via $\left\{\left(a_j, b_j\right)\right\}_{j=0}^M$.\\
Compute the integral $I$ in \eqref{eq:I-def-gfreud-left}, and return $\widehat{F}_n\left(x; \mu^{(\alpha,\rho)}\right)$ given by \eqref{eq:Fn-I-gfreud-left}.
\caption{Computation of $F_n(x)$ for $\mu = \mu_{HF}^{(\alpha,\rho)}$ corresponding to a half-line Freud weight.}\label{alg:Fn-GFreud-left}
\end{algorithm2e}

Now assume that $x \geq x_0$. We compute $F_n^c$ directly in a similar fashion as we did for $F_n$. We have
\begin{align*}
  F^c_n(x) &= \frac{1}{c_{HF}^{(\alpha, \rho)}} \int_x^{\infty} p_n^2\left(t\right) t^\rho \exp\left(-t^\alpha\right) \dx{t} \\
           &\leftstackrel{u=t-x}{=} \frac{1}{c_{HF}^{(\alpha, \rho)}} \int_0^{\infty} p_n^2\left(u+x\right) \left(u+x\right)^\rho \exp\left(-\left(u + x\right)^\alpha\right) \dx{u} \\
           &= \exp(-x^\alpha) \frac{c_{HF}^{(\alpha,0)}}{c_{HF}^{(\alpha, \rho)}} \int_0^{\infty} p_n^2\left(u+x\right) \left(u+x\right)^\rho \exp\left(u^\alpha + x^\alpha - \left(u + x\right)^\alpha\right) \exp(-u^\alpha)\dx{u}
\end{align*}
We again use this to define a new measure $\widetilde{\mu}_n$ and an associated $M$-point Gauss quadrature $\left(u_m, w_m\right)_{m=1}^M$. The recurrence coefficients for $\widetilde{\mu}_n$ are computable via polynomial measure modifications. This results in the approximation
\begin{align}
  \dx{\widetilde{\mu}_n}(u) &\coloneqq p_n^2\left(u+x\right) \dx{\mu_{HF}}^{(\alpha,0)} \\\label{eq:I-def-gfreud}
  I(x)     &= \sum_{m=1}^M \widetilde{w}_m \left(u_m + x\right)^\rho \exp\left(u_m^\alpha + x^\alpha - (u_m + x)^\alpha\right)\\\label{eq:Fn-I-gfreud}
  \widehat{F}^c_n(x) &= \exp(-x^\alpha)\frac{c_{HF}^{(\alpha, 0)}}{c_{HF}^{(\alpha, \rho)}} I(x)
\end{align}
%Let's assume $\alpha = 1$: then we have 
%\begin{align*}
%  F^c_n(x) &= \frac{\exp(-x)}{c_L^{1, \rho}} \int_0^{\infty} p_n^2\left(u+x\right) \left(u+x\right)^\rho \exp\left(-u\right) \dx{u} \\
%           &= \frac{\exp(-x) c_L^{1, 0}}{c_L^{1, \rho}} \int_0^{\infty} p_n^2\left(u+x\right) \left(u+x\right)^\rho \dx{\mu^{\ast(1,0)}}(u)
%\end{align*}
%The roots $x_{j,n}$, $j=1, \ldots, n$ of $p_n(\cdot)$ can be used to compute the roots of $p_n(\cdot+x)$, given by $u_{j,n} = x_{j,n} - x$. %Now define $\dx{\widetilde{\mu}}_n^{(\ast(1,0)} \coloneqq p_n^2(u+x) \dx{\mu^{\ast(1,0)}}$, the computation of which can be accomplished by successive quadratic modification as described in the Jacobi case. Then
%\begin{align}\label{eq:I-def-gfreud}
%  I(x)     &= \int_0^{\infty} \left(u+x\right)^\rho \dx{\widetilde{\mu}_n^{\ast(1,0)}}(u) \\\label{eq:Fn-I-gfreud}
%  F^c_n(x) &= \frac{c_L^{1, 0}}{c_L^{1, \rho}} e^{-x} I(x) = 1 - \frac{1}{e^x \Gamma\left(\rho+1\right)} I(x)
%\end{align}
%We accomplish computation of $I(x)$ via $M$-point $\widetilde{\mu}_n^{\ast(1,0)}$-Gaussian quadrature. 
A more detailed algorithm is given in Algorithm \ref{alg:Fn-GFreud}. Of course, once $F_n^c$ is computed we may compute $F_n(x) = 1 - F_n^c(x)$.

%Worksheet:
%\begin{align*}
%  \frac{c_{F\ast}^{(1,0)}}{c_{F\ast}^{(1,\rho)}} = \frac{\Gamma(1)}{\Gamma(\rho+1)} = \frac{1}{\Gamma(\rho)}.
%\end{align*}

\begin{algorithm2e}[H]%need H when used with beamer
\SetKwInOut{Input}{input}\SetKwInOut{Output}{output}
\Input{$\alpha > \frac{1}{2}, \rho > -1$: generalized Freud weight parameters}
\Input{$n\in \N_0$ and $x \geq 0$: Order of induced polynomial and measure $\mu_n$ and value $x$.}
\Input{$M \in \N$: Quadrature order for approximate computation of $F^c_n(x)$.}
\Output{$\widehat{F}^c_n(x)$}
\BlankLine
Compute $n$ zeros, $\left\{x_{j,n}\right\}_{j=1}^n$ of $p_n = p_n\left(\cdot; \mu^{(a,\rho)}\right)$, and leading coefficient $\gamma_n$ of $p_n$.\;
Compute recurrence coefficients $a_j$ and $b_j$ associated to $\mu_{HF}^{(\alpha,0)}$ for $0 \leq j \leq M + 2 n$.\;
\For{$j=1, \ldots, n$}{
  %Use \eqref{eq:quadratic-modification} to update $a_n$ and $b_n$ for $n = 0, \ldots, M + 2(n-j)$.\\
  Quadratic measure modification \eqref{eq:quadratic-modification}: update $a_n$, $b_n$ for $n=0, \ldots, M + A + 2(n-j)$ with modification factor $\left(u - \left( x_{j,n} - x \right)\right)^2$.\\
  %Update recurrence coefficients $a_n$ and $b_n$ for $n = 0, \ldots, M + 2(n-j)$ via modification:\\
  %\hskip 10pt If $u_{j,n} = x_{j,n} - x \geq 0$, then use quadratic modification \eqref{eq:quadratic-modification}.\\
  %\hskip 10pt If $u_{j,n} < 0$, then use twice-application of linear modification \eqref{eq:linear-modification}.\\
  Scale $b_0 \gets b_0 \exp\left(\frac{1}{n} \log \gamma_n^2 \right)$.
}
%Compute $M$-point Gauss quadrature associated with measure $\widetilde{\mu}_n$ via $\left\{\left(a_j, b_j\right)\right\}_{j=0}^M$.\;
%Approximate the integral $I$ in \eqref{eq:I-def-gfreud} with the $M$-point quadrature.\;
%Evaluate \eqref{eq:Fn-I-gfreud}.\;
Compute $M$-point Gauss quadrature $\left(\widetilde{u}_m, \widetilde{w}_m\right)_{m=1}^M$ associated with measure $\widetilde{\mu}_n$ via $\left\{\left(a_j, b_j\right)\right\}_{j=0}^M$.\\
Compute the integral $I$ in \eqref{eq:I-def-gfreud}, and return $\widehat{F}^c_n\left(x; \mu^{(\alpha,\rho)}\right)$ given by \eqref{eq:Fn-I-gfreud}.
\caption{Computation of $\widehat{F}^c_n(x)$ for $\mu^{(\alpha,\rho)}_{HF}$ corresponding to a half-line Freud weight.}\label{alg:Fn-GFreud}
\end{algorithm2e}

Errors between $F_n$ and computational approximations $\widehat{F}_n$ are shown in Figure \ref{fig:half-freud-error}. We see that we require a much larger value of $M$ in order to achieve accurate approximations compared to the Jacobi case. We believe this to be the case due to the function $\exp(u^\alpha + x^\alpha - (u+x)^\alpha)$ appearing in the integral $I(x)$. Note that for $\alpha = 1$ this function becomes unity and so does not adversely affect the integral; this results in the much more favorable error plot on the left in Figure \ref{fig:half-freud-error}. 

The different behavior for $\alpha = 1$ leads us to make customized choices in this case: we choose $M = 25$ for all values of $n$ and $\rho$, and we take $x_0(n) \equiv 50$.  Whereas for $\alpha \neq 1$ our tests suggest that $M = n + 10$ is sufficient to achieve good accuracy, and we take $x_0$ as the average of $a_\pm$ as given in \eqref{eq:half-freud-interval}.

\begin{figure}
  \begin{center}
    \includegraphics[width=0.49\textwidth]{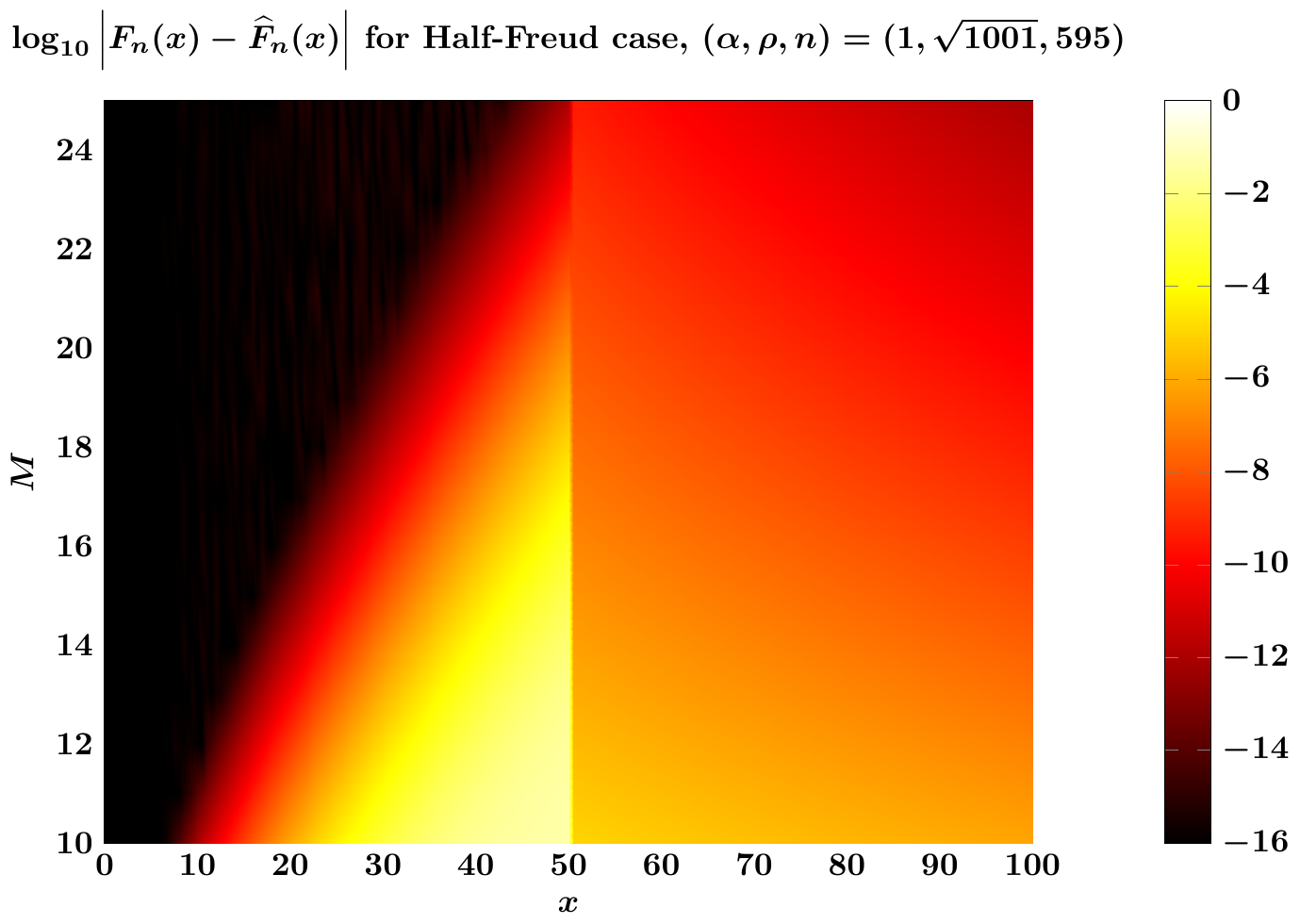}
    \includegraphics[width=0.49\textwidth]{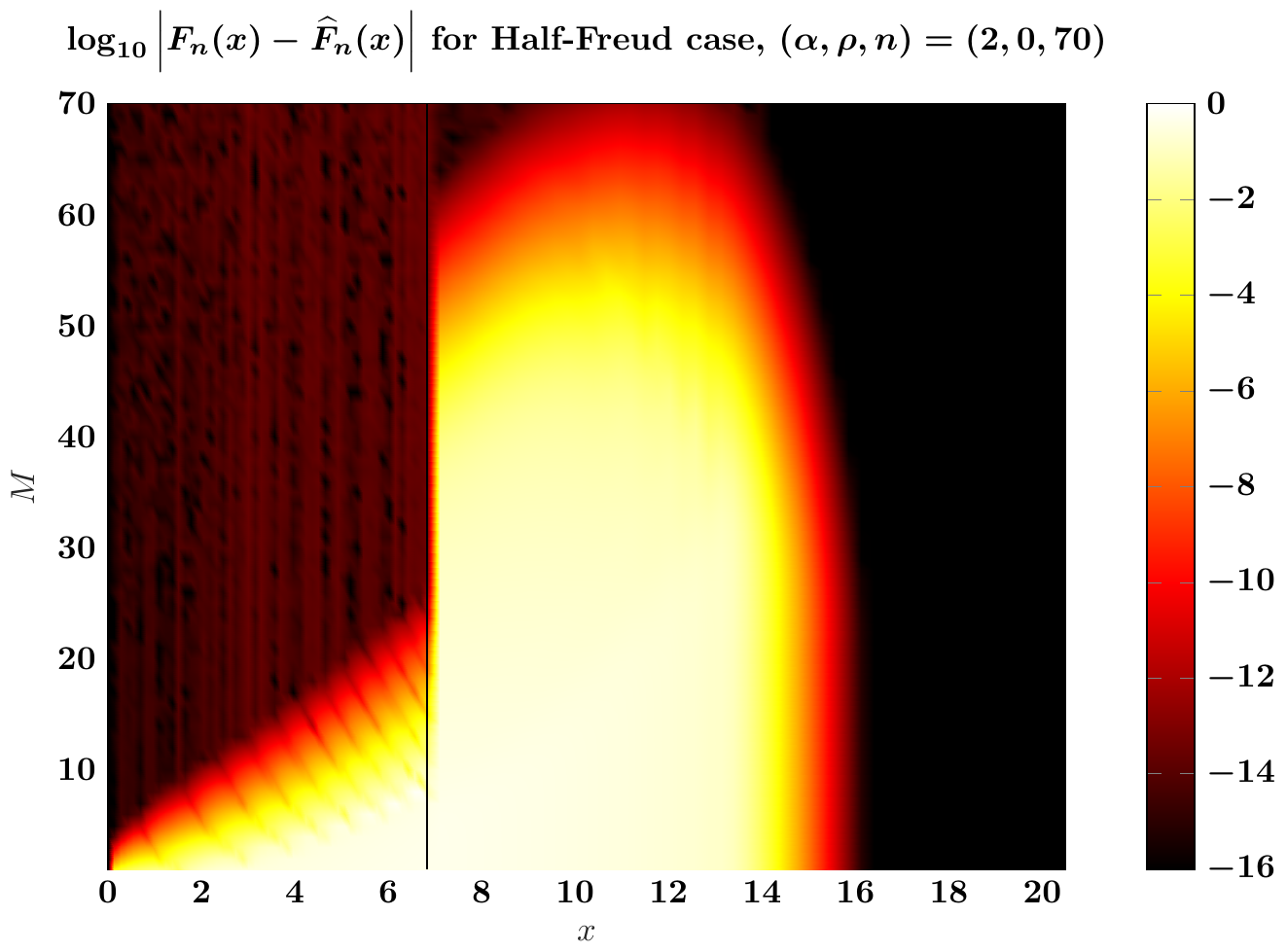}
  \end{center}
  \caption{Color plot of $\log_{10} \left| F_n(x) - \widehat{F}_n(x) \right|$ for half-line Freud measures with certain choices of the order $n$ and various quadrature size $M$. Left: $(\alpha, \rho, n) = \left(1, \sqrt{1001}, 595\right)$. Right: $(\alpha, \rho, n) = \left(2, 0, 70\right)$. Much larger values of $M$ are needed when compared against the Jacobi case in Figure \ref{fig:jacobi-error}.}\label{fig:half-freud-error}
\end{figure}

\subsection{Freud weights}
Finally, consider the Freud measure $\mu_F^{(\alpha,\rho)}$ defined in Table \ref{tab:measures}. 
%$\mu$ be a measure supported on $\R$ whose density is 
%\begin{align}\label{eq:freud-weight}
%  \dx{\mu}_F(x) &= \frac{1}{c_F^{(a,\rho)}} |x|^\rho \exp\left( - |x|^a \right), & a > 1, \hskip 5pt \rho > -1
%\end{align}
%where $c_F^{(\rho, a)}$ is a normalization factor, equal to 
%\begin{align}\label{eq:freud-normalization}
%  c_F^{(a, \rho)} = \frac{2}{a} \Gamma \left(\frac{\rho + 1}{a} \right).
%\end{align}
An especially important case occurs for $\alpha= 2$, $\rho = 0$, corresponding to the classical Hermite polynomials. Note that the recurrence coefficients for general values of $a$ and $\rho$ are not known explicitly, but their asymptotic behavior has been established \cite{lubinsky_proof_1988}.

It is well-known that Freud weights are essentially half-line Freud weights in disguise under a quadratic map. It is not surprising then that we may express primitives of induced polynomial measures for Freud weights in terms of the associated half-line Freud primitives.
\begin{theorem}\label{thm:freud-is-gfreud}
  Let parameters $(\alpha,\rho)$ define a Freud weight and associated measure $\mu_F^{(\alpha,\rho)}$. Then for $x \leq 0$, 
  \begin{align}\label{eq:freud-is-gfreud}
    F_n\left(x; \mu_F^{(\alpha,\rho)}\right) = \left\{ \begin{array}{rl} 
        \frac{1}{2} F_{n/2}^c\left( x^2; \mu_{HF}^{(\alpha/2,(\rho-1)/2)} \right), & n \textrm{ even} \\
        \frac{1}{2} F_{(n-1)/2}^c \left(x^2; \mu_{HF}^{(\alpha/2, (\rho+1)/2)} \right), & n \textrm{ odd}
      \end{array}\right.
  \end{align}
  For $x \geq 0$, we have
  \begin{align}\label{eq:freud-primitive-is-even}
    F_n\left(x; \mu_F^{(\alpha,\rho)}\right) = 1 - F_n\left(-x; \mu_F^{(\alpha,\rho)}\right).
  \end{align}
\end{theorem}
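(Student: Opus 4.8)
The plan is to exploit the classical fact that orthonormal polynomials for an even weight on $\R$ split, under the quadratic map $u=x^2$, into two families of orthonormal polynomials on $[0,\infty)$, and that the Freud weight $\dx{\mu}_F^{(\alpha,\rho)}$ folds precisely onto the half-line Freud weights appearing in \eqref{eq:freud-is-gfreud}. First I would record the parity structure: since $\dx{\mu}_F^{(\alpha,\rho)}$ is even, $p_n(-x)=(-1)^n p_n(x)$, so there are degree-$m$ polynomials $q_m,r_m$ with $p_{2m}(x)=q_m(x^2)$ and $p_{2m+1}(x)=x\,r_m(x^2)$. Substituting these into the orthogonality relations $\int p_j p_k \dx{\mu}_F = \delta_{jk}$ and changing variables $u=x^2$ (folding $\R$ onto $[0,\infty)$, which contributes the Jacobian $\tfrac12 u^{-1/2}$) shows that $\{q_m\}$ is orthogonal on $[0,\infty)$ against $u^{(\rho-1)/2}\exp(-u^{\alpha/2})$ and $\{r_m\}$ against $u^{(\rho+1)/2}\exp(-u^{\alpha/2})$, i.e. against the half-line Freud weights with parameters $(\alpha/2,(\rho-1)/2)$ and $(\alpha/2,(\rho+1)/2)$ respectively; the Freud conditions $\alpha>1$, $\rho>-1$ guarantee these are admissible.

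Next I would pin down the normalizations, which is where one must be careful. Tracking constants through the same change of variables gives $\int_0^\infty q_m^2(u)\,u^{(\rho-1)/2}\exp(-u^{\alpha/2})\dx{u}=c_F^{(\alpha,\rho)}$ and $\int_0^\infty r_m^2(u)\,u^{(\rho+1)/2}\exp(-u^{\alpha/2})\dx{u}=c_F^{(\alpha,\rho)}$, which should be compared with the analogous identities for the genuine $\mu_{HF}$-orthonormal polynomials $\widetilde q_m,\widetilde r_m$, whose right-hand sides are $c_{HF}^{(\alpha/2,(\rho-1)/2)}$ and $c_{HF}^{(\alpha/2,(\rho+1)/2)}$. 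Using the explicit Gamma-function formulas of Table \ref{tab:measures} one checks $c_F^{(\alpha,\rho)}=c_{HF}^{(\alpha/2,(\rho-1)/2)}=\tfrac{2}{\alpha}\Gamma\!\left(\tfrac{\rho+1}{\alpha}\right)$, so $q_m=\widetilde q_m$ up to sign, whereas $c_{HF}^{(\alpha/2,(\rho+1)/2)}=\tfrac{2}{\alpha}\Gamma\!\left(\tfrac{\rho+3}{\alpha}\right)\neq c_F^{(\alpha,\rho)}$, so $r_m^2=\bigl(c_F^{(\alpha,\rho)}/c_{HF}^{(\alpha/2,(\rho+1)/2)}\bigr)\,\widetilde r_m^2$.

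With these identifications the main computation is a direct substitution. For $x\le 0$ and $n=2m$, I start from $F_n(x)=\tfrac{1}{c_F^{(\alpha,\rho)}}\int_{-\infty}^x q_m^2(t^2)\,|t|^\rho\exp(-|t|^\alpha)\dx{t}$; since $t\le x\le 0$ we have $|t|=-t$, and $u=t^2$ is monotone decreasing on $(-\infty,0]$, carrying $(-\infty,x]$ onto $[x^2,\infty)$ — exactly the complementary range, which is what turns $F_n$ into an $F^c$ — and producing a factor $\tfrac12$. Simplifying with $q_m=\widetilde q_m$ and $c_F^{(\alpha,\rho)}=c_{HF}^{(\alpha/2,(\rho-1)/2)}$ gives $F_{2m}(x)=\tfrac12 F_m^c\bigl(x^2;\mu_{HF}^{(\alpha/2,(\rho-1)/2)}\bigr)$, i.e. \eqref{eq:freud-is-gfreud} with $m=n/2$. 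For $n=2m+1$ the same substitution applied to $F_n(x)=\tfrac{1}{c_F^{(\alpha,\rho)}}\int_{-\infty}^x t^2 r_m^2(t^2)\,|t|^\rho\exp(-|t|^\alpha)\dx{t}$ yields $\tfrac{1}{2 c_F^{(\alpha,\rho)}}\int_{x^2}^\infty r_m^2(u)\,u^{(\rho+1)/2}\exp(-u^{\alpha/2})\dx{u}$, and the rescaling $r_m^2=\bigl(c_F^{(\alpha,\rho)}/c_{HF}^{(\alpha/2,(\rho+1)/2)}\bigr)\widetilde r_m^2$ converts the prefactor $\tfrac{1}{2 c_F^{(\alpha,\rho)}}$ into $\tfrac{1}{2 c_{HF}^{(\alpha/2,(\rho+1)/2)}}$, giving $F_{2m+1}(x)=\tfrac12 F_m^c\bigl(x^2;\mu_{HF}^{(\alpha/2,(\rho+1)/2)}\bigr)$ with $m=(n-1)/2$. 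Finally, \eqref{eq:freud-primitive-is-even} for $x\ge0$ follows from symmetry alone: because $\mu_F^{(\alpha,\rho)}$ is invariant under $x\mapsto-x$ and $p_n^2$ is even, the measure $p_n^2\dx{\mu}_F$ is symmetric, so $F_n(-x)=\int_{-\infty}^{-x}p_n^2\dx{\mu}_F=\int_x^\infty p_n^2\dx{\mu}_F=1-F_n(x)$.

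The step I expect to be the main obstacle is the normalization bookkeeping — specifically, recognizing that $q_m$ is literally the half-line Freud orthonormal polynomial while $r_m$ is only a constant multiple of one, and verifying that this constant cancels exactly against the mismatch between $c_F^{(\alpha,\rho)}$ and $c_{HF}^{(\alpha/2,(\rho+1)/2)}$, so that both branches of \eqref{eq:freud-is-gfreud} inherit the same clean factor $\tfrac12$. The remaining ingredients — the parity decomposition, the change of variables, and the orientation-reversal that produces $F^c$ rather than $F$ — are routine once the constants are under control.
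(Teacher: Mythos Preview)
Your proposal is correct and follows essentially the same route as the paper: the paper packages the parity decomposition and normalization identities into a separate lemma (establishing $p_{2n}(x)=p_{\ast,n}(x^2)$ and $p_{2n+1}(x)=h\,x\,p_{\ast\ast,n}(x^2)$ with $h^2=\Gamma((\rho+1)/\alpha)/\Gamma((\rho+3)/\alpha)$, which is exactly your ratio $c_F^{(\alpha,\rho)}/c_{HF}^{(\alpha/2,(\rho+1)/2)}$), and then applies the substitution $u=t^2$ to $F_n$ just as you do. Your inline treatment and the paper's lemma-plus-proof structure are the same argument, and you have correctly identified the normalization bookkeeping in the odd case as the one nontrivial step.
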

Note that with expressions \eqref{eq:freud-is-gfreud} and \eqref{eq:freud-primitive-is-even}, an algorithm for computing $F_n\left(\cdot; \mu_F\right)$ is straightforward to devise utilizing Algorithm \ref{alg:Fn-GFreud} for $F_n^c\left(\cdot; \mu_{HF}\right)$.

The result \eqref{eq:freud-primitive-is-even} follows easily from the fact that the integrand in \eqref{eq:mun-def} defining $F_n$ is an even function. To prove the main portion of the theorem, expression \eqref{eq:freud-is-gfreud}, we require the following result relating Freud orthonormal polynomials to half-line Freud orthonormal polynomials.
\begin{lemma}\label{lemma:hermite-laguerre-relation}
  Let $\rho > -1$ and $\alpha > 1$ be parameters that define a Freud measure $\mu^{(\alpha,\rho)}_F$ with associated orthonormal polynomial family $p_n(x) = p_n\left(x; \mu_F^{(\alpha,\rho)}\right)$. Define two sets of half-line Freud parameters $\left(\alpha_\ast, \rho_\ast\right)$ and $\left( \alpha_{\ast\ast}, \rho_{\ast\ast}\right)$ and the corresponding half-line Freud measures and polynomials:
  \begin{subequations}\label{eq:all-ast-def}
  \begin{align}\label{eq:ast-def}
    \alpha_\ast &\coloneqq \frac{\alpha}{2}, & \rho_\ast &\coloneqq \frac{\rho-1}{2}, & p_{\ast,n}(x) \coloneqq p_{n}\left(x; \mu_{HF}^{(\alpha_\ast,\rho_\ast)}\right),\\\label{eq:astast-def}
    %\int_0^\infty p_{\ast, n}(x) p_{\ast,m}(x) \dx{\mu_{F\ast}^{\left(\alpha_\ast, \rho_\ast \right)}}(x)&= \delta_{m,n} 
    \alpha_{\ast\ast} &\coloneqq \frac{\alpha}{2}, & \rho_{\ast\ast} &\coloneqq \frac{\rho+1}{2}, & p_{\ast\ast,n}(x) \coloneqq p_{n}\left(x; \mu_{HF}^{(\alpha_{\ast\ast},\rho_{\ast\ast})}\right)
    %\int_0^\infty p_{\ast\ast,n}(x) p_{\ast\ast,m}(x) \dx{\mu_{F\ast}^{\left(\alpha_{\ast\ast}, \rho_{\ast\ast}\right)}}(x) &= \delta_{m,n}
  \end{align}
  \end{subequations}
  Also define the constant
  \begin{align}\label{eq:h-def}
    h^2 = h^2(\alpha,\rho) \coloneqq \frac{\Gamma\left( \frac{\rho + 1}{\alpha}\right)}{\Gamma\left( \frac{\rho+3}{\alpha}\right)}
  \end{align}
  Then, for all $n \geq 0$, 
  \begin{align*}
    p_{2 n}\left(x\right) &= p_{\ast,n} \left( x^2 \right), \\
    p_{2 n+1}\left(x\right) &= h x p_{\ast\ast,n} \left( x^2 \right),
  \end{align*}
  %The family $\left\{p_n^\ast\right\}_{n=0}^\infty$ is the orthonormal polynomial family associated to a generalized Freud measure $\mu^\ast$ with parameters $\alpha^\ast = \alpha/2$ and $\rho^\ast = \frac{\rho - 1}{2}$. The family $\left\{p_n^{\ast\ast}\right\}_{n=0}^\infty$ is the orthonormal polynomial family associated to a generalized Freud measure $\mu^{\ast\ast}$ with parameters $\alpha^{\ast\ast} = \alpha/2$ and $\rho^{\ast\ast} = \frac{\rho + 1}{2}$.
\end{lemma}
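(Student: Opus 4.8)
The plan is to use the reflection symmetry of the Freud weight $|x|^\rho e^{-|x|^\alpha}$ on $\R$, together with the quadratic change of variables $u = x^2$, to transfer the orthonormality relations for $\mu_F^{(\alpha,\rho)}$ into orthonormality relations on $[0,\infty)$. Since the weight is even, the orthonormal family satisfies $p_n(-x) = (-1)^n p_n(x)$, so $p_{2n}$ is an even polynomial of degree $2n$ and $p_{2n+1}$ is an odd polynomial of degree $2n+1$. Hence there are polynomials $P_n$ and $Q_n$ of degree exactly $n$, each with positive leading coefficient (because $p_{2n}$ and $p_{2n+1}$ do, by the normalization in \eqref{eq:leading-coefficient}), with $p_{2n}(x) = P_n(x^2)$ and $p_{2n+1}(x) = x\,Q_n(x^2)$. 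It remains to identify $P_n$ and $Q_n$.

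Next I would substitute $u = x^2$ in the defining integrals. Folding $\R$ onto $[0,\infty)$ contributes a factor $2$, and $\dx{x} = \tfrac12 u^{-1/2}\dx{u}$; for the even indices these exactly combine to give
\[
  \delta_{n,m} = \frac{1}{c_F^{(\alpha,\rho)}}\int_{\R} p_{2n}(x)\,p_{2m}(x)\,|x|^\rho e^{-|x|^\alpha}\dx{x}
  = \frac{1}{c_F^{(\alpha,\rho)}}\int_0^\infty P_n(u)\,P_m(u)\, u^{(\rho-1)/2} e^{-u^{\alpha/2}}\dx{u},
\]
while for the odd indices the extra factor $x^2 = u$ raises the exponent on $u$ by one:
\[
  \delta_{n,m} = \frac{1}{c_F^{(\alpha,\rho)}}\int_0^\infty Q_n(u)\,Q_m(u)\, u^{(\rho+1)/2} e^{-u^{\alpha/2}}\dx{u}.
\]
Thus $P_n$ is (up to a positive constant) orthonormal for the half-line Freud weight with parameters $\alpha_\ast = \alpha/2$, $\rho_\ast = (\rho-1)/2$, and $Q_n$ for the one with $\alpha_{\ast\ast} = \alpha/2$, $\rho_{\ast\ast} = (\rho+1)/2$, exactly the parameters in \eqref{eq:all-ast-def}; one notes these are admissible, since $\alpha > 1$ gives $\alpha_\ast = \alpha_{\ast\ast} > \tfrac12$ and $\rho > -1$ gives $\rho_\ast, \rho_{\ast\ast} > -1$.

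Finally I would fix the normalizing constants. From Table \ref{tab:measures}, $c_F^{(\alpha,\rho)} = \tfrac{2}{\alpha}\Gamma\!\left(\tfrac{\rho+1}{\alpha}\right)$ and $c_{HF}^{(\alpha',\rho')} = \tfrac{1}{\alpha'}\Gamma\!\left(\tfrac{\rho'+1}{\alpha'}\right)$; a direct computation gives $c_{HF}^{(\alpha_\ast,\rho_\ast)} = c_F^{(\alpha,\rho)}$, so the normalized measure governing $P_n$ above is precisely $\mu_{HF}^{(\alpha_\ast,\rho_\ast)}$, and uniqueness of orthonormal polynomials with positive leading coefficient yields $P_n = p_{\ast,n}$. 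Likewise $c_{HF}^{(\alpha_{\ast\ast},\rho_{\ast\ast})} = \tfrac{2}{\alpha}\Gamma\!\left(\tfrac{\rho+3}{\alpha}\right)$, so rescaling the $Q_n$ relation into the $\mu_{HF}^{(\alpha_{\ast\ast},\rho_{\ast\ast})}$-normalization forces $Q_n = h\,p_{\ast\ast,n}$ with $h^2 = c_F^{(\alpha,\rho)}/c_{HF}^{(\alpha_{\ast\ast},\rho_{\ast\ast})} = \Gamma\!\left(\tfrac{\rho+1}{\alpha}\right)/\Gamma\!\left(\tfrac{\rho+3}{\alpha}\right)$, in agreement with \eqref{eq:h-def}; the positive sign of $h$ comes from comparing leading coefficients. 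Substituting these into the parity decomposition gives the two claimed identities. Every step is elementary; the only real care needed is the bookkeeping of the folding factor, the Jacobian, and the Gamma-function identities for the constants, so that is the ``main obstacle,'' such as it is.
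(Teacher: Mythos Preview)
Your proposal is correct and is essentially the paper's argument run in the opposite direction: you start from the Freud orthonormal family, use parity to write $p_{2n}(x)=P_n(x^2)$ and $p_{2n+1}(x)=xQ_n(x^2)$, and then identify $P_n,Q_n$ via the substitution $u=x^2$; the paper instead starts from the half-line families $p_{\ast,n}(x^2)$ and $hx\,p_{\ast\ast,n}(x^2)$, verifies their $\mu_F^{(\alpha,\rho)}$-orthonormality by the same substitution, and invokes uniqueness to conclude they are the $p_n$. The change of variables, the constant identities $c_{HF}^{(\alpha_\ast,\rho_\ast)}=c_F^{(\alpha,\rho)}$ and $c_F^{(\alpha,\rho)}=h^2 c_{HF}^{(\alpha_{\ast\ast},\rho_{\ast\ast})}$, and the appeal to uniqueness of orthonormal polynomials with positive leading coefficient are identical in both.
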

\begin{proof}
  The following equalities may be verified via direct computation using the definitions \eqref{eq:all-ast-def} and \eqref{eq:h-def} along with the expressions in Table \ref{tab:measures}:
  \begin{align}\label{eq:cF-cFast-relations}
    c_F^{(\alpha,\rho)} &= c_{HF}^{(\alpha_\ast,\rho_\ast)}, & 
    c_{F}^{(\alpha,\rho)} &= h^2 c_{HF}^{(\alpha_{\ast\ast},\rho_{\ast\ast})}
  \end{align}
  The proof of this lemma relies on the change of measure $t \mapsto x^2$. We have
  \begin{align*}
    \delta_{m,n} &= \int_0^\infty p_{\ast,n}(t) p_{\ast,m}(t) \dx{\mu_{HF}^{\left(\alpha_\ast,\rho_\ast\right)}(t)} = \frac{1}{c_{HF}^{(\alpha_\ast,\rho_\ast)}} \int_0^\infty p_{\ast,n} (t) p_{\ast,m}(t) t^{\rho_\ast} \exp\left(-t^{\alpha_\ast}\right) \dx{t} \\
                 &= \frac{2}{c_{HF}^{(\alpha_\ast,\rho_\ast)}} \int_0^\infty p_{\ast,n}\left(x^2\right) p_{\ast,m}\left(x^2\right) x^{2 \rho_\ast+1} \exp\left(-x^{2\alpha_\ast}\right) \dx{x} \\
                 &= \frac{1}{c_{F}^{(\alpha,\rho)}} \int_{-\infty}^\infty p_{\ast,n}\left(x^2\right) p_{\ast,m}\left(x^2\right) \left|x\right|^{2 \rho_\ast+1} \exp\left(-\left|x\right|^{2\alpha_\ast}\right) \dx{x} \\
                 &= \int_{-\infty}^{\infty} p_{\ast,n}\left(x^2\right) p_{\ast,m}\left(x^2\right) \dx{\mu_F^{(\alpha,\rho)}}(x).
  \end{align*}
  This relation shows that the family $\left\{p_{\ast,n}\left(x^2\right)\right\}_{n=0}^\infty$ are polynomials of degree $2 n$ that are orthonormal under a Freud weight with parameters $\alpha = 2\alpha_\ast$ and $\rho = 2 \rho_\ast + 1$. Using nearly the same arguments, but with the family $p_{\ast\ast,n}$, yields the relation
  \begin{align*}
    \delta_{m,n} &= \int_0^\infty p_{\ast\ast,n}(t) p_{\ast\ast,m}(t) \dx{\mu_{HF}^{\left(\alpha_{\ast\ast},\rho_{\ast\ast}\right)}(t)} \\
                 &= \frac{1}{c_{HF}^{(\alpha_{\ast\ast},\rho_{\ast\ast})}} \int_0^\infty p_{\ast\ast,n}(t) p_{\ast\ast,m}(t) t^{\rho_{\ast\ast}} \exp\left(-t^{\alpha_{\ast\ast}}\right) \dx{t} \\
                 &= \frac{2}{c_{HF}^{(\alpha_{\ast\ast},\rho_{\ast\ast})}} \int_0^\infty p_{\ast\ast,n}\left(x^2\right) p_{\ast\ast,m}\left(x^2\right) x^{2 \rho_{\ast\ast}+1} \exp\left(-x^{2\alpha_{\ast\ast}}\right) \dx{t} \\
                 &= \frac{h^2}{c_F^{(\alpha,\rho)}} \int_{-\infty}^\infty x p_{\ast\ast,n}\left(x^2\right)\; x p_{\ast\ast,m}\left(x^2\right) \left|x\right|^{2 \rho_{\ast\ast}-1} \exp\left(-\left|x\right|^{2\alpha_{\ast\ast}}\right) \dx{t} \\
                 &= h^2 \int_{-\infty}^{\infty} x p_{\ast\ast,n}\left(x^2\right)\; x p_{\ast\ast,m}\left(x^2\right) \dx{\mu_F^{(\alpha,\rho)}}(x).
  \end{align*}
  This relation shows that the family $\left\{h x p_{\ast\ast,n}\left(x^2\right)\right\}_{n=0}^\infty$ are polynomials of degree $2 n+1$ that are orthonormal under the same Freud having parameters $\alpha = 2\alpha_{\ast\ast}$ and $\rho = 2 \rho_{\ast\ast} - 1$. We also have that $x p_{\ast\ast,n}\left(x^2\right)$ is orthogonal to $p_{\ast,m}\left(x^2\right)$ under a(ny) Freud weight for any $n, m$ because of even-odd symmetry. Thus, define 
  \begin{align*}
    P_n(x) = \left\{ \begin{array}{rl} p_{\ast,n/2}\left(x^2\right), & n \textrm{ even} \\
    h x p_{\ast\ast,(n-1)/2}\left(x^2\right), & n \textrm{ odd}
  \end{array}\right.
  \end{align*}
  Then $\left\{P_n\right\}_{n=0}^\infty$ is a family of degree-$n$ polynomials (with positive leading coefficient) orthonormal under a $(\alpha, \rho)$ Freud weight. Therefore, $P_n \equiv p_n$.
\end{proof}

We can now give the 
\begin{proof}[Proof of Thereom \ref{thm:freud-is-gfreud}]
Assume $x \leq 0$. Then 
\begin{align*}
  F_n\left(x; \mu_F^{(\alpha,\rho)}\right) &= \frac{1}{c_F^{(a, \rho)}} \int_{-\infty}^x p_n^2\left(t\right) |t|^{\rho} \exp\left(- |t|^\alpha\right) \dx{t},  \\
         &\leftstackrel{u = t^2}{=} \frac{1}{2 c_F^{(a, \rho)}} \int_{x^2}^{\infty} p_n^2\left( -\sqrt{u}\right) |u|^{\rho_\ast} \exp\left(-|u|^{\alpha_\ast}\right) \dx{u},
\end{align*}
where $(\alpha_\ast, \rho_\ast)$ are as defined in \eqref{eq:ast-def}. By Lemma \ref{lemma:hermite-laguerre-relation}, 
\begin{align*}
    p^2_n(-\sqrt{u}) = \left\{ \begin{array}{rl} p^2_{\ast,n/2}\left(u\right), & n \textrm{ even} \\
    h^2 u p^2_{\ast\ast,(n-1)/2}\left(u\right), & n \textrm{ odd}
\end{array}\right.
\end{align*}
  where $(\alpha_{\ast\ast}, \rho_{\ast\ast})$ are defined in \eqref{eq:astast-def}. Then if $n$ is even, we have
\begin{align*}
  F_n\left(x; \mu_F^{(\alpha_\ast,\rho_\ast)}\right) &=  \frac{1}{2 c_{HF}^{(\alpha_\ast, \rho_\ast)}} \int_{x^2}^{\infty} p^2_{\ast,n/2}(u) |u|^{\rho_\ast} \exp\left(-|u|^{\alpha_\ast}\right) \dx{u}, \\
                                                    &= \frac{1}{2} F_{n/2}^c\left( x^2; \mu_{HF}^{(\alpha_\ast,\rho_\ast)} \right),
\end{align*}
where we recall the equalities \eqref{eq:cF-cFast-relations} related $c_F$ to $c_{F\ast}$. Similarly, if $n$ is odd we have
\begin{align*}
  F_n\left(x; \mu_F^{(\alpha,\rho)}\right) &=  \frac{h^2}{2 c_F^{(\alpha, \rho)}} \int_{x^2}^{\infty} u p^2_{\ast\ast,(n-1)/2}(u) |u|^{\rho_\ast} \exp\left(-|u|^{\alpha_\ast}\right) \dx{u} \\
                                           &=  \frac{1}{2 c_{HF}^{(\alpha_{\ast\ast}, \rho_{\ast\ast})}} \int_{x^2}^{\infty} p^2_{\ast\ast,(n-1)/2}(u) |u|^{\rho_{\ast\ast}} \exp\left(-|u|^{\alpha_{\ast\ast}}\right) \dx{u} \\
         &= \frac{1}{2} F_{(n-1)/2}^c \left(x^2; \mu_{HF}^{(\alpha_{\ast\ast}, \rho_{\ast\ast})} \right)
\end{align*}
The combination of these results proves \eqref{eq:freud-is-gfreud}.
\end{proof}

\section{Inverting induced distributions}\label{sec:invert-induced}
We have discussed at length in previous sections algorithms for computing $F_n(x)$ defined in \eqref{eq:mun-def} for various Lebesgue-continuous measures $\mu$. The central application of these algorithms we investigate in this paper is actually in the evaluation of $F_n^{-1}(u)$ for $u \in [0,1]$. We accomplish this by solving for $x$ in the equation
\begin{align}\label{eq:rootfind}
  F_n(x) - u &= 0, & u &\in [0,1],
\end{align}
using a root-finding method. Our first step involves providing an initial guess for $x$.

\subsection{Computing an initial interval}\label{sec:ms-interval}
We use $s_{\pm}$ to denote the (possibly infinite) endpoints of the support of $\mu$:
\begin{align*}
  -\infty \leq s_- &\coloneqq \inf \left(\supp \mu\right), & s_+ &\coloneqq \sup \left(\supp \mu\right) \leq \infty.
\end{align*}
Now let $u \in [0,1]$. Our first step in finding $F_{n}^{-1}(u)$ is to compute two values $x_-$ and $x_+$ such that 
\begin{align}\label{eq:root-sandwich}
  x_- \leq F^{-1}_n(u) \leq x_+.
\end{align}
Our procedure for identifying an initial interval containing $F_n^{-1}(u)$ leverages the Markov-Stiltjies inequalities for orthogonal polynomials. These inequalities state that empirical probability distributions of Gauss quadrature rules generated from a measure bound the distribution function for this measure. Precisely:
\begin{lemma}[Markov-Stiltjies Inequalties, \cite{szego_orthogonal_1975}]
  Let $\mu$ be a probability measure on $\R$ with an associated orthogonal polynomial family. For any $N \in \N$, let $\left\{ x_{k,N}, w_{k,N} \right\}_{k=1}^N$ denote the $N$ $\mu$-Gaussian quadrature nodes and weights, respectively. Then:
  \begin{align*}
    \sum_{k=1}^{m-1} w_{k,N} \leq &F\left(x_{m,N}\right) \leq \sum_{k=1}^m w_{k,N}, & 1 &\leq m \leq N.
  \end{align*}
\end{lemma}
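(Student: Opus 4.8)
The plan is to recast the two bounds as a comparison between $\mu$ and its $N$-point Gaussian quadrature measure. Set $\mu_G \coloneqq \sum_{k=1}^N w_{k,N}\,\delta_{x_{k,N}}$; since $\sum_k w_{k,N} = \int_{\R}\dx{\mu} = 1$, this $\mu_G$ is again a probability measure, and exactness of Gauss quadrature gives $\int_{\R} P\,\dx{\mu} = \int_{\R} P\,\dx{\mu_G}$ for every polynomial $P$ with $\deg P \leq 2N-1$. Ordering the (simple) nodes as $x_{1,N} < x_{2,N} < \cdots < x_{N,N}$, all lying in $\supp\mu$, the distribution function $F_G$ of $\mu_G$ satisfies $F_G(x_{m,N}^-) = \sum_{k=1}^{m-1} w_{k,N}$ and $F_G(x_{m,N}) = \sum_{k=1}^{m} w_{k,N}$. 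So the lemma says exactly that, at each atom $x_{m,N}$ of $\mu_G$, the value $F(x_{m,N})$ is trapped between the left- and right-hand values of $F_G$.

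The central step is to produce one-sided polynomial approximants of the step function $\mathbbm{1}_{(-\infty,\,x_{m,N}]}$ that coincide with it at all the nodes. Fix $m$; I would construct polynomials $Q_m$ and $P_m$, each of degree at most $2N-1$, with
\begin{align*}
  Q_m(x) \;\leq\; \mathbbm{1}_{(-\infty,\,x_{m,N}]}(x) \;\leq\; P_m(x), \qquad x \in \R,
\end{align*}
and with $P_m(x_{k,N}) = 1$ for $k \leq m$ and $P_m(x_{k,N}) = 0$ for $k > m$, while $Q_m(x_{k,N}) = 1$ for $k \leq m-1$ and $Q_m(x_{k,N}) = 0$ for $k \geq m$. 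Granting these, positivity of the weights $w_{k,N}$ makes the quadrature sums monotone, and
\begin{align*}
  \sum_{k=1}^{m-1} w_{k,N} = \int_{\R} Q_m\,\dx{\mu_G} = \int_{\R} Q_m\,\dx{\mu} \leq F(x_{m,N}) \leq \int_{\R} P_m\,\dx{\mu} = \int_{\R} P_m\,\dx{\mu_G} = \sum_{k=1}^{m} w_{k,N},
\end{align*}
where the outer equalities expand $\int\,\dx{\mu_G}$ over the nodes, the inner equalities are Gauss exactness (here I use $\deg Q_m, \deg P_m \leq 2N-1$), and the two middle inequalities are the pointwise sandwich integrated against $\mu$. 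This is precisely the assertion of the lemma.

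What remains is to build $P_m$ and $Q_m$, and I expect this to be the main obstacle. The natural candidates are Hermite interpolants pinned at the Gauss nodes: for $P_m$, prescribe value $1$ and derivative $0$ at $x_{1,N},\dots,x_{m-1,N}$, value $0$ and derivative $0$ at $x_{m+1,N},\dots,x_{N,N}$, and value $1$ at $x_{m,N}$; this is $2(m-1)+2(N-m)+1 = 2N-1$ interpolation conditions, which determine $P_m$ uniquely with $\deg P_m \leq 2N-1$ (and $Q_m$ is built the same way but with value $0$ imposed at $x_{m,N}$). The node-value requirements then hold by construction, and the real content is verifying the pointwise inequality: that $P_m \geq 1$ on $(-\infty, x_{m,N})$ and $P_m \geq 0$ on $(x_{m,N},\infty)$, together with the mirror statements for $Q_m$. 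This is a sign-chasing argument: on $(x_{m,N},\infty)$ the polynomial $P_m$ carries prescribed double zeros at $x_{m+1,N},\dots,x_{N,N}$, while on $(-\infty,x_{m,N})$ the polynomial $P_m-1$ carries prescribed double zeros at $x_{1,N},\dots,x_{m-1,N}$; combining these multiplicities with the degree bound $\deg P_m \leq 2N-1$ and with the simplicity and interlacing of the Gauss nodes forces $P_m - \mathbbm{1}_{(-\infty,\,x_{m,N}]}$ to keep a constant sign on each interval delimited by the nodes and the breakpoint $x_{m,N}$, and the imposed values pin that sign to be nonnegative. One must separately check the behaviour on the tails beyond $x_{1,N}$ and $x_{N,N}$, and the slightly delicate situation at the singly-pinned breakpoint $x_{m,N}$ itself; once the sign of this difference is settled the sandwich holds and the displayed chain closes.
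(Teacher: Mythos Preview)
The paper does not prove this lemma; it quotes it with attribution to Szeg\H{o}'s book and moves on. Your outline is the classical Chebyshev--Markov--Stieltjes argument found there (Theorem~3.41.1 and its surroundings): build one-sided polynomial majorants and minorants of the indicator $\mathbbm{1}_{(-\infty,x_{m,N}]}$ by Hermite interpolation at the Gauss nodes, integrate against $\mu$, and invoke exactness of the quadrature. So there is nothing to compare against in the paper itself, and your route is the standard one.

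Two small remarks on the write-up. First, $2N-1$ Hermite conditions determine a unique polynomial of degree at most $2N-2$, not $2N-1$; this is harmless here since Gauss exactness covers degree $\leq 2N-1$, but it matters for the sign argument because an even-degree polynomial can be bounded below on all of $\R$ whereas an odd-degree one cannot. Second, the ``sign-chasing'' you defer is indeed where the work lies, and your sketch is in the right direction but not yet a proof: one clean way to finish is to note that $P_m$ factors as $P_m(x) = \prod_{k>m}(x-x_{k,N})^2 \cdot R(x)$ with $\deg R \leq 2m-2$, and that $P_m - 1$ factors as $\prod_{k<m}(x-x_{k,N})^2 \cdot S(x)$ with $\deg S \leq 2(N-m)$; a zero count then forces $R$ and $S$ to keep constant sign on the relevant half-lines, and evaluating at a convenient point (e.g., $x_{m,N}$) pins that sign. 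Once this is made explicit your argument is complete.
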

Let $\left\{p_{j,n}\right\}_{j=0}^\infty$ denote the sequence of polynomial orthonormal under the induced measure $\mu_n$. Given $N \in \N$, let $\left\{z_{k,N}, v_{k,N} \right\}_{k=1}^N$ denote the $N$-point $\mu_n$-Gaussian quadrature rule, i.e., $z_{k,N}$ are the $N$ ordered zeros of $p_{N,n}(x)$, with $v_{k,N}$ the associated weights. Since $\mu_n$ is a probability measure, then $\sum_{k=1}^N v_{k,N} = 1$. As such, given $u \in [0,1]$ we can always find some $m \in \left\{1, \ldots, N\right\}$ such that 
\begin{align}\label{eq:u-markov-stiltjies-sandwich}
  \sum_{k=1}^{m-1} v_{k,N} \leq u \leq \sum_{k=1}^m v_{k,N}.
\end{align}
Then, defining $z_{0,N} \equiv s_-$ and $z_{N+1,N} \equiv s_+$ for all $N$ and $n$, we have
\begin{align*}
  F_n\left(z_{m-1,N}\right) \leq \sum_{k=1}^{m-1} v_{k,N} \leq u \leq \sum_{k=1}^m v_{k,N} \leq F_n\left(z_{m+1,N}\right)
\end{align*}
Since $F_n$ is non-decreasing, this is equivalently,
\begin{align*}
  z_{m-1,N} \leq F_n^{-1}\left(u\right) \leq z_{m+1,N}
\end{align*}
Thus, if we find an $m$ such that \eqref{eq:u-markov-stiltjies-sandwich} holds, then \eqref{eq:root-sandwich} holds with
\begin{align}\label{eq:initial-guess}
  x_- &= z_{m-1,N}, & x_+ &= z_{m+1, N}
\end{align}
When $\mathrm{supp}\;\mu$ is bounded, the $N$-asymptotic density of orthogonal polynomial zeros on $\mathrm{supp}\;\mu$ guarantees that we can find a bounding interval with endpoints $x_{\pm}$ of arbitrarily small width by taking $N$ sufficiently large. The difficulty is that we therefore require the zeros $z_{k,N}$ and the quadrature weights $v_{k,N}$ of the induced measure, which in turn require knowledge of the three-term recurrence coefficients associated to $\mu_n$. These can be easily computed from the coefficients associated to $\mu$; since
\begin{align}\label{eq:induced-measure-breakdown}
  \dx{\mu}_n(x) = p_n^2(x) \dx{\mu}(x) = \gamma_n^2 \prod_{j=1}^n \left(x - x_{j,n} \right)^2 \dx{\mu}(x),
\end{align}
then we may again iteratively utilize the quadratic modification algorithm given by \eqref{eq:quadratic-modification} to compute these recurrence coefficients, which are iteratively quadratic modifications of the $\mu$-coefficients. (Note that this is precisely the procedure proposed in \cite{gautschi_set_1993} for computing these coefficients.)

\subsection{Bisection}
For simplicity, the root-finding method we employ to solve \eqref{eq:rootfind} is the bisection approach. More sophisticated methods may be used, with the caveat that the derivative of the function, $F'(x) = p_n^2(x) \dx{\mu}(x)$, vanishes wherever $p_n$ has a root. We have found that a naive application of Newton's method for root-finding often runs into trouble, even with a very accurate initial guess.

The bisection method for root-finding applied to \eqref{eq:rootfind} starts with an initial guess for an interval $[x_-, x_+]$ containing the root $x$, and iteratively updates this interval via
\begin{align*}
  x_- \gets \frac{1}{2} \left(x_- + x_+\right) \hskip 15pt &\textrm{ if } \hskip 15pt F_n\left(\frac{1}{2} \left(x_- + x_+\right) \right) \leq u \\
  x_+ \gets \frac{1}{2} \left(x_- + x_+\right) \hskip 15pt &\textrm{ if } \hskip 15pt F_n\left(\frac{1}{2} \left(x_- + x_+\right) \right) > u
\end{align*}
After a sufficient number of iterations so that $x_+ - x_-$ and/or $|F(x_-) - F(x_+)|$ is smaller than a tunable tolerance parameter, one can confidently claim to have found the root $x$ to within this tolerance. A good initial guess for $x_{\pm}$ lessens the number of evaluations of $F_n$ in a bisection approach and thus accelerates overall evaluation of $F_n^{-1}$. 

The overall algorithm for solving \eqref{eq:rootfind} is to (i) compute the recurrence coefficients associated with $\mu_n$ in \eqref{eq:induced-measure-breakdown} via quadratic measure modifications, (ii) compute order-$N$ $\mu_n$-Gaussian quadrature nodes and weights $z_{j,N}$ and $v_{j,N}$, respectively, (iii) identify $m$ such that \eqref{eq:u-markov-stiltjies-sandwich} holds so that $x_{\pm}$ may be computed in \eqref{eq:initial-guess}, and (iv) iteratively apply the bisection algorithm with the initial interval defined by $x_{\pm}$ using the evaluation procedures for $F_n$ outlined in Section \ref{sec:Fn-eval}.

\section{Applications}\label{sec:applications}
This section discusses two applications of sampling from univariate induced measures. Both these applications consider multivariate scenarios, and are based on the fact that many ``interesting" multivariate sampling measures are additive mixtures of tensorized univariate induced measures. Our first task is to introduce notation for tensorized orthogonal polynomials.

We will write $d$-variate polynomials using multi-index notation: $\lambda \in \N_0^d$ denotes a multi-index with components $\lambda = \left(\lambda_1, \ldots, \lambda_d\right)$ and magnitude $|\lambda| = \sum_{j=1}^d \lambda_j$. A point $x \in \R^d$ has components $x = \left(x_1, \ldots, x_d \right)$, and $x^\lambda = \prod_{j=1}^d x_j^{\lambda_j}$. A collection of multi-indices will be denoted $\Lambda \subset \N_0^d$; we will assume that $N = \left|\Lambda\right|$ is finite. 

Let $\mu$ be a tensorial measure on $\R^d$ such that each of its $d$ marginal univariate measures $\mu^{(j)}$, $j=1, \ldots, d$ admits a $\mu^{(j)}$-orthonormal polynomial family $\left\{ p_{j,n} \right\}_{n=0}^\infty$ on $\R$, satisfying
\begin{align*}
  \int_\R p_{j,n}\left(x_j\right) p_{j,m}\left(x_j\right) \dx{\mu^{(j)}}\left(x_j\right) &= \delta_{m,n}, & n, m& \in \N_0,\;\; j = 1, \ldots, d.
\end{align*}
A tensorial $\mu$ allows us to explicitly construct an orthonormal polynomial family for $\mu$ from univariate polynomials,
\begin{align*}
  p_\lambda(x) \coloneqq \prod_{j=1}^d p_{j, \lambda_j}\left(x_j\right).
\end{align*}
These polynomials are an $L^2_{\dx{\mu}}$-orthonormal basis for the subspace $P_\Lambda$, defined as
\begin{align*}
  P_\Lambda = \mathrm{span} \left\{ p_\lambda \;\; | \;\; \lambda \in \Lambda \right\}.
\end{align*}
Under the additional assumption that the index set $\Lambda$ is downward-closed, then $P_\Lambda = \mathrm{span} \left\{ x^\lambda \;\; |\;\; \lambda \in \Lambda\right\}$.

We extend our definition of induced polynomials to this tensorial multivariate situation. For any $\lambda \in \Lambda$, the order-$\lambda$ \textit{induced} measure $\mu_\lambda$ is defined as 
\begin{align*}
  \dx{\mu}_\lambda(x) \coloneqq p_\lambda^2(x) \dx{\mu}(x) = \prod_{j=1}^d p^2_{j, \lambda_j}\left(x_j\right) \dx{\mu^{(j)}}(x) = \prod_{j=1}^d \dx{\mu}^{(j)}_{\lambda_j},
\end{align*}
where $\dx{\mu}^{(j)}_{\lambda_j}$ is the (univariate) order-$\lambda_j$ induced measure for $\mu^{(j)}$ according to the definition \eqref{eq:mun-def}. Thus, $\mu_\lambda$ is also a tensorial measure. %We will also use the notation $F_\lambda$ to denote the distribution function for $\mu_\lambda$.

\subsection{Optimal polynomial discrete least-squares}\label{sec:applications-ls}
The goal of this section is description of a procedure utilizing the algorithms above for performing discrete least-squares recovery in a polynomial subspace using the optimal (fewest) number of samples. The procedure we discuss was proposed in \cite{cohen_optimal_2016} and is based on the foundational matrix concentration estimates for least-squares derived in \cite{cohen_stability_2013}. 

Let $f: \R^d \rightarrow \R$ be a $d$-variate function. Given (i) a tensorial probability measure $\mu$ admitting an orthonormal polynomial family, and (ii) a dimension-$N$ polynomial subspace $P_\Lambda$, we are interested in approximating the $L^2_{\dx{\mu}}$-orthogonal projection of $f$ onto $P_\Lambda$. This projection is given explicitly by 
\begin{align*}
  \Pi_\Lambda f &= \sum_{\lambda \in \Lambda} c^\ast_\lambda p_\lambda(x), & c^\ast_\lambda &= \int_{\R^d} f(x) p_\lambda(x) \dx{\mu}(x).
\end{align*}
One way to approximate the integral defining the coefficients $c^\ast_\lambda$ is via a Monte Carlo least-squares procedure using $M$ collocation samples of the function $f(x)$. Let $\left\{X_m\right\}_{m=1}^M$ denote a collection of $M$ independent and identically distributed random variables on $\R^d$, where we leave the distribution of $X_m$ unspecified for the moment. A weighted discrete least-squares recovery procedure approximates $c^\ast_\lambda$ with $c_\lambda$, computed as
\begin{align}\label{eq:discrete-least-squares}
  \left\{c_\lambda \right\}_{\lambda \in \Lambda} = \argmin_{d_\lambda \in \R} \frac{1}{M} \sum_{m=1}^M w_m \left[ f(X_m) - \sum_{\lambda \in \Lambda} d_\lambda p_\lambda(X_m)\right]^2,
\end{align}
where $w_m$ are positive weights. One supposes that if the distribution of $X_m$ and the weights $w_m$ are chosen intelligently, then it is possible to recover the $N$ coefficients $c_\lambda$ with a relatively small number of samples $M$; ideally, $M$ should be close to $N$. The analysis in \cite{cohen_stability_2013} codifies conditions on a required sample count $M$ so that the minimization procedure above is stable, and so that the recovered coefficients $c_\lambda$ are ``close" to $c_\lambda^\ast$; these conditions depend on the distribution of $X_m$, on $w_m$, on $\mu$, and on $P_\Lambda$. Since $\mu$ and $P_\Lambda$ are specified, the goal here is identification of an appropriate distribution for $X_m$ and weight $w_m$.

Using ideas proposed in \cite{narayan_christoffel_2016,hampton_coherence_2015} the results in \cite{cohen_optimal_2016} show that, in the context of the analysis in \cite{cohen_stability_2013}, the \textit{optimal} choice of probability measure $\mu_X$ for sampling $X_m$ and weights $w_m$ that achieves a minimal sample count $M$ are
\begin{align}\label{eq:optimal-sampling}
  \mu_X = \mu_\Lambda &= \frac{1}{N} \sum_{\lambda \in \Lambda} \mu_\lambda, & w_m &= \frac{N}{\sum_{\lambda \in \Lambda} p_\lambda^2\left(X_m\right)}.
\end{align}
The precise quantification of the sample count and error estimates can be formulated using an algebraic characterization of \eqref{eq:discrete-least-squares}. Define matrices $\bs{V} \in \R^{M \times N}$ and $\bs{W} \in \R^{M \times M}$, and vectors $\bs{c} \in \R^N$ and $\bs{f} \in \R^M$ as follows:
\begin{align*}
  (V)_{m,n} &= p_{\lambda(n)}\left(X_m\right), & (W)_{j,k} &= w_j \delta_{j,k}, \\
  (c)_{n} &= c_{\lambda(n)}, & (f)_{m} &= f\left(X_m\right),
\end{align*}
where $\lambda(1), \ldots, \lambda(N)$ represents any enumeration of elements in $\Lambda$. We use $\|\cdot\|$ on matrices to denote the induced $\ell^2$ norm. The algebraic version of \eqref{eq:discrete-least-squares} is then to compute $\bs{c}$ that minimizes the the least-squares residual of $\sqrt{\bs{W}} \bs{V} \bs{c} = \sqrt{\bs{W}} \bs{f}$. The following result holds.
\begin{theorem}[\cite{cohen_stability_2013,cohen_optimal_2016}]
  Let $0 < \delta < 1$,  and $r > 0$ be given, and define $c_\delta \coloneqq \delta + (1-\delta) \log(1-\delta) \in (0,1)$. Draw $M$ iid samples $\left\{ X_m\right\}_{m=1}^M$ from $\mu_X$, and let the coefficients $c_\lambda$ be those recovered from \eqref{eq:discrete-least-squares}. If
  \begin{align}\label{eq:ls-sample-count}
    \frac{M}{\log M} \geq N \frac{1+r}{c_\delta}
  \end{align}
  Then 
  \begin{align*}
    \mathrm{Pr} \left[ \left\| \bs{V}^T \bs{W} \bs{V} - \bs{I} \right\| > \delta \right] &\leq 2 M^{-r} \\
    \E \left\| f - T_L\left(\sum_{\lambda \in \Lambda} c_\lambda p_\lambda(\cdot)\right) \right\|_{L^2_{\dx{\mu}}} &\leq \left[ 1 + \frac{4c_\delta}{(1+r) \log M} \right] \left\| f - \Pi_\Lambda f \right\|_{L^2_{\dx{\mu}}} + 8 \left\|f\right\|_{L^\infty(\supp \mu)} M^{-r}
  \end{align*}
\end{theorem}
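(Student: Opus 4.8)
The plan is to prove the two conclusions separately: the spectral estimate on $\|\bs V^T\bs W\bs V - \bs I\|$ is a matrix Chernoff bound, and the error estimate is a conditional second-moment argument in the spirit of \cite{cohen_stability_2013,cohen_optimal_2016}.

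For the spectral estimate, I would write $\bs G := \bs V^T\bs W\bs V = \frac{1}{M}\sum_{m=1}^M \bs Y_m$ with $\bs Y_m := w_m \bs v_m \bs v_m^T$, where $\bs v(x) := (p_{\lambda(1)}(x),\dots,p_{\lambda(N)}(x))^T$ and $\bs v_m := \bs v(X_m)$ (the $\frac{1}{M}$ is inherited from the normalization in \eqref{eq:discrete-least-squares}). The first observation is that the density of $\mu_X = \mu_\Lambda$ with respect to $\mu$ is $\frac{1}{N}\sum_{\lambda\in\Lambda} p_\lambda^2 = \frac{1}{N}\|\bs v\|_2^2$, so orthonormality of $\{p_\lambda\}$ gives $\E\bs Y_m = \int \bs v(x)\bs v(x)^T\dx{\mu}(x) = \bs I$. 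The decisive observation --- and the reason \eqref{eq:optimal-sampling} is ``optimal'' --- is that $w_m = N/\|\bs v_m\|_2^2$ forces $\|\bs Y_m\| = w_m\|\bs v_m\|_2^2 = N$ \emph{deterministically}. With a uniform operator-norm bound $N/M$ on the summands $\frac{1}{M}\bs Y_m$ and $\sum_m \frac{1}{M}\E\bs Y_m = \bs I$, I would apply Tropp's one-sided matrix Chernoff inequalities to both tails of $\mathrm{spec}(\bs G)$; the lower tail satisfies $\mathrm{Pr}[\lambda_{\min}(\bs G) \le 1-\delta] \le N(e^{-\delta}(1-\delta)^{-(1-\delta)})^{M/N} = N e^{-Mc_\delta/N}$, using $-\log(e^{-\delta}(1-\delta)^{-(1-\delta)}) = c_\delta$, and the upper tail is controlled in the same fashion, so that $\mathrm{Pr}[\|\bs G - \bs I\| > \delta] \le 2N e^{-Mc_\delta/N}$. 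Substituting \eqref{eq:ls-sample-count}, which gives $Mc_\delta/N \ge (1+r)\log M \ge \log N + r\log M$ (the hypothesis forces $M \ge N$), yields the bound $2M^{-r}$.

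For the error estimate, I would condition on the event $E := \{\lambda_{\min}(\bs G) \ge 1-\delta\}$ (which contains $\{\|\bs G - \bs I\| \le \delta\}$ and has probability at least $1 - 2M^{-r}$), on which $\bs G$ is invertible with $\|\bs G^{-1}\| \le (1-\delta)^{-1}$. Decompose $f = \Pi_\Lambda f + e$ with $e \perp P_\Lambda$ in $L^2_{\dx{\mu}}$. Since weighted least squares is linear and reproduces $P_\Lambda$ exactly whenever $\bs G$ is invertible, the recovered approximant $f_\Lambda := \sum_{\lambda} c_\lambda p_\lambda$ decomposes as $f_\Lambda = \Pi_\Lambda f + e_\Lambda$, where $e_\Lambda \in P_\Lambda$ has coefficient vector $\bs c_e = \bs G^{-1}\bs V^T\bs W\bs e$ with $\bs e := (e(X_m))_{m=1}^M$. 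Two moment computations drive the bound: (i) $\E[\bs V^T\bs W\bs e] = \bs 0$, since each component has mean $\langle p_{\lambda(n)}, e\rangle_{L^2_{\dx{\mu}}} = 0$ (using again that $w\cdot\frac{1}{N}\|\bs v\|_2^2 \equiv 1$); and (ii) $\E\|\bs V^T\bs W\bs e\|_2^2 \le \frac{1}{M}\E[w^2\|\bs v\|_2^2 e^2] = \frac{N}{M}\|e\|_{L^2_{\dx{\mu}}}^2$, using $w^2\|\bs v\|_2^2 = Nw$ and $\E[w e^2] = \|e\|_{L^2_{\dx{\mu}}}^2$. These give $\E[\|e_\Lambda\|_{L^2_{\dx{\mu}}}^2\,\mathbbm{1}_E] = \E[\|\bs c_e\|_2^2\,\mathbbm{1}_E] \le (1-\delta)^{-2}\frac{N}{M}\|e\|_{L^2_{\dx{\mu}}}^2$, and then the Pythagorean identity $\|f - f_\Lambda\|_{L^2_{\dx{\mu}}}^2 = \|e\|_{L^2_{\dx{\mu}}}^2 + \|e_\Lambda\|_{L^2_{\dx{\mu}}}^2$ (valid since $f - f_\Lambda = e - e_\Lambda$ and $e \perp e_\Lambda$) yields $\E[\|f - f_\Lambda\|_{L^2_{\dx{\mu}}}^2\,\mathbbm{1}_E] \le (1 + (1-\delta)^{-2}N/M)\|e\|_{L^2_{\dx{\mu}}}^2$. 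Since the truncation $T_L$ at level $L = \|f\|_{L^\infty(\supp\mu)}$ contracts toward $f$ pointwise $\mu$-a.e., $\|f - T_L(f_\Lambda)\|_{L^2_{\dx{\mu}}} \le \|f - f_\Lambda\|_{L^2_{\dx{\mu}}}$ on $E$; Jensen's inequality, $\sqrt{1+x} \le 1 + \tfrac{x}{2}$, and $N/M \le c_\delta/((1+r)\log M)$ from \eqref{eq:ls-sample-count} then give the contribution of $E$ to $\E\|f - T_L(f_\Lambda)\|_{L^2_{\dx{\mu}}}$, namely $(1 + \tfrac{4c_\delta}{(1+r)\log M})\|f - \Pi_\Lambda f\|_{L^2_{\dx{\mu}}}$ after absorbing the $\delta$-dependent factor into the constant. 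On $E^c$ I would use only the crude estimate $\|f - T_L(f_\Lambda)\|_{L^2_{\dx{\mu}}} \le \|f\|_{L^2_{\dx{\mu}}} + L \le 2\|f\|_{L^\infty(\supp\mu)}$, whose expectation against $\mathrm{Pr}(E^c) \le 2M^{-r}$ produces the additive $O(\|f\|_{L^\infty(\supp\mu)} M^{-r})$ term; summing the two contributions is the claimed estimate.

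The matrix-Chernoff half is routine once the deterministic identity $\|\bs Y_m\| \equiv N$ is recognized --- this is precisely what the optimal choice \eqref{eq:optimal-sampling} buys. The steps I expect to require the most care are (i) the indispensable role of the truncation $T_L$: without it the recovered $f_\Lambda$ can be arbitrarily far from $f$ on the failure event $E^c$, and $\E\|f - f_\Lambda\|_{L^2_{\dx{\mu}}}$ need not even be finite; and (ii) arranging the conditional second-moment bound so that the multiplicative constant in front of $\|f - \Pi_\Lambda f\|_{L^2_{\dx{\mu}}}$ tends to $1$ (not $2$) as $M \to \infty$, which forces the use of the exact splitting $f - f_\Lambda = e - e_\Lambda$ with $e \perp e_\Lambda$ together with the sharp variance bound on $\bs V^T\bs W\bs e$, rather than a lossy triangle-inequality argument.
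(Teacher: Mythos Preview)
The paper does not prove this theorem; it is quoted from \cite{cohen_stability_2013,cohen_optimal_2016} without argument, so there is no in-paper proof to compare against. Your sketch faithfully reproduces the argument from those references: the deterministic identity $w_m\|\bs v_m\|_2^2 \equiv N$ followed by Tropp's matrix Chernoff bound is exactly how \cite{cohen_optimal_2016} obtains the log-linear sample complexity from the choice \eqref{eq:optimal-sampling}, and the conditional second-moment splitting $f - f_\Lambda = e - e_\Lambda$ with $e \perp e_\Lambda$, together with the truncation $T_L$ to control the failure event, is precisely the argument of \cite{cohen_stability_2013}.

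One small caveat: the upper-tail exponent in Tropp's matrix Chernoff inequality is $(1+\delta)\log(1+\delta) - \delta$, which is strictly smaller than $c_\delta = \delta + (1-\delta)\log(1-\delta)$ on $(0,1)$, so ``the upper tail is controlled in the same fashion'' is not literally true with the same constant. This is harmless for the error estimate (which only uses the lower-tail event $\lambda_{\min}(\bs G) \ge 1-\delta$), but the two-sided spectral statement $\Pr[\|\bs G - \bs I\|>\delta] \le 2M^{-r}$ under the hypothesis \eqref{eq:ls-sample-count} written solely in terms of $c_\delta$ needs a slightly more careful treatment of the upper tail than you indicate.
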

The free parameter $r$ is a tunable oversampling rate; $\delta$ represents the guaranteeable proximity of $\bs{V}^T \bs{W} \bs{V}$ to $\bs{I}$. We emphasize that by choosing $\mu_X = \mu_\Lambda$ with the weights defined as in $\bs{W}$, then the size of $M$ as required by \eqref{eq:ls-sample-count} depends \textit{only} on the the cardinality $N$ of $\Lambda$, and not on its shape. Furthermore, the criterion $M/\log M \gtrsim N$ is optimal up to the logarithmic factor. Also, the statements above hold uniformly over all multivariate $\mu$. 

Note that the optimal sampling measure $\mu_X$ is an additive mixture of induced measures and can be easily sampled, assuming $\mu_\lambda$ can be sampled. Sampling from $\mu_X$ defined above is fairly straightforward given the algorithms in this paper: (1) given $\Lambda$ choose an element $\lambda$ randomly using the uniform probability law, (2) generate $d$ independent, uniform, continuous random variables $U_j$, $j=1, \ldots, d$ each on the interval $[0,1]$, (3) compute $X \in \R^d$ as
\begin{align*}
  X = \left( F^{-1}_{\lambda_1}\left(U_1; \mu^{(1)}\right), \;\;\; F^{-1}_{\lambda_2}\left(U_2; \mu^{(2)}\right), \;\;\; \ldots, \;\;\; F^{-1}_{\lambda_d}\left(U_d; \mu^{(d)}\right) \right).
\end{align*}
Then $X$ is a sample from the probability measure $\mu_X$. Note that the work required to sample $X$ requires only $d$ samples from a univariate induced measure. The procedure above is essentially as described by the authors in \cite{cohen_optimal_2016}; this paper gives a concrete computational method to sample from $\mu_\Lambda$ for a relatively general class of measures $\mu$ (i.e., those formed by arbitrary finite tensor products of Jacobi, half-line Freud, and/or Freud univariate measures). Thus, the algorithms in this paper along with the specifications \eqref{eq:optimal-sampling} allow one to perform optimal discrete least-squares using Monte Carlo sampling for approximation with multivariate polynomials.

\subsection{Weighted equilibrium measures}\label{sec:applications-equilibrium}

On $\R^d$, consider the special case $\dx{\mu}(x) = \exp(-\|x\|^2)$, where $\|\cdot\|$ is the Euclidean norm on $\R^d$. The weighted equilibrium measure $\mu^\ast$ is a probability measure that is the weak limit of the summations
\begin{align}\label{eq:hermite-limit}
  \lim_{n \rightarrow \infty} \sum_{|\lambda| \leq n} p_\lambda^2\left(x/\sqrt{2n}\right) \dx{\mu}(x/\sqrt{2n}) \Rightarrow \dx{\mu^\ast}(x).
\end{align}
The form for $\mu^\ast$ is not currently known, but the authors in \cite{narayan_christoffel_2016} conjecture that $\mu^\ast$ has support on the unit ball with density
\begin{align}\label{eq:conjecture}
  \dx{\mu^\ast}(x) &\stackrel{?}{=} g_d(\left\| x\right\|) = C_d \left(1 - \left\|x\right\|^2\right)^{d/2}, & \left\| x \right\| \leq 1,
\end{align}
where $C_d = \left(\pi\right)^{-(d+1)/2} \Gamma\left(\frac{d+1}{2}\right)$. If $X$ on $\R^d$ is distributed according to $g_d$, then the cumulative distribution function associated to $\left\|X \right\|$ is 
\begin{align}\label{eq:equilibrium-conjecture-distribution}
  G_d(r) \coloneqq Pr\left[ \left\| X \right\| \leq r \right] = K \int_0^r g_d(x) r^{d-1} \dx{x},
\end{align}
where the $r^{d-1}$ factor in the integrand is the $\R^d$ Jacobian factor for integration in spherical coordinates, and $K$ is the associated normalization constant. Note that the cumulative distribution function $G_d$ is a mapped (normalized) incomplete Beta function with parameters $a = d/2$ and $b = 1 + d/2$,  
\begin{align*}
  G_d(r) &= \frac{1}{B\left(\frac{d}{2}, 1 + \frac{d}{2} \right)} \int_0^{r^2} t^{d/2} (1-t)^{1 + d/2} \dx{t},
\end{align*}
where $B(\cdot,\cdot)$ is the Beta function. With $d=1$, the veracity of this limit is known \cite{}. Using the algorithms in this paper, we can empirically test the conjecture. Precisely, defining $\Lambda_n \coloneqq \left\{ \lambda \in \N_0^d \; | \; |\lambda| \leq n \right\}$, then the conjecture for \eqref{eq:hermite-limit} reads
\begin{align*}
  \lim_{n \rightarrow \infty} \sum_{|\lambda| \leq n} p_\lambda^2\left(x/\sqrt{2n}\right) \dx{\mu}(x/\sqrt{2n}) = \lim_{n \rightarrow \infty} \mu_{\Lambda_n}(x/\sqrt{2n}) \stackrel{?}{\Rightarrow} C_d \left( 1 - \left\| x\right\|^2 \right)^{d/2}
\end{align*}
Our procedure for testing this conjecture is as follows: for a fixed $d$ and large $n$, we generate $M$ iid samples $\left\{ X_m \right\}_{m=1}^M$ distributed according to $\mu_{\Lambda_n}$, and compute the empirical distribution function associated with the ensemble of scalars $\left\{ \left\| X_m \right\|/\sqrt{2 n} \right\}_{m=1}^M$. We show in Figure \ref{fig:equilibrium} that indeed for large $n$ that empirical distributions associated with these ensembles match very closely with the distribution function $G_d(r)$, giving evidence that supports, but does not prove, the conjecture \eqref{eq:conjecture}.

\begin{figure}
  \begin{center}
    \includegraphics[width=0.32\textwidth]{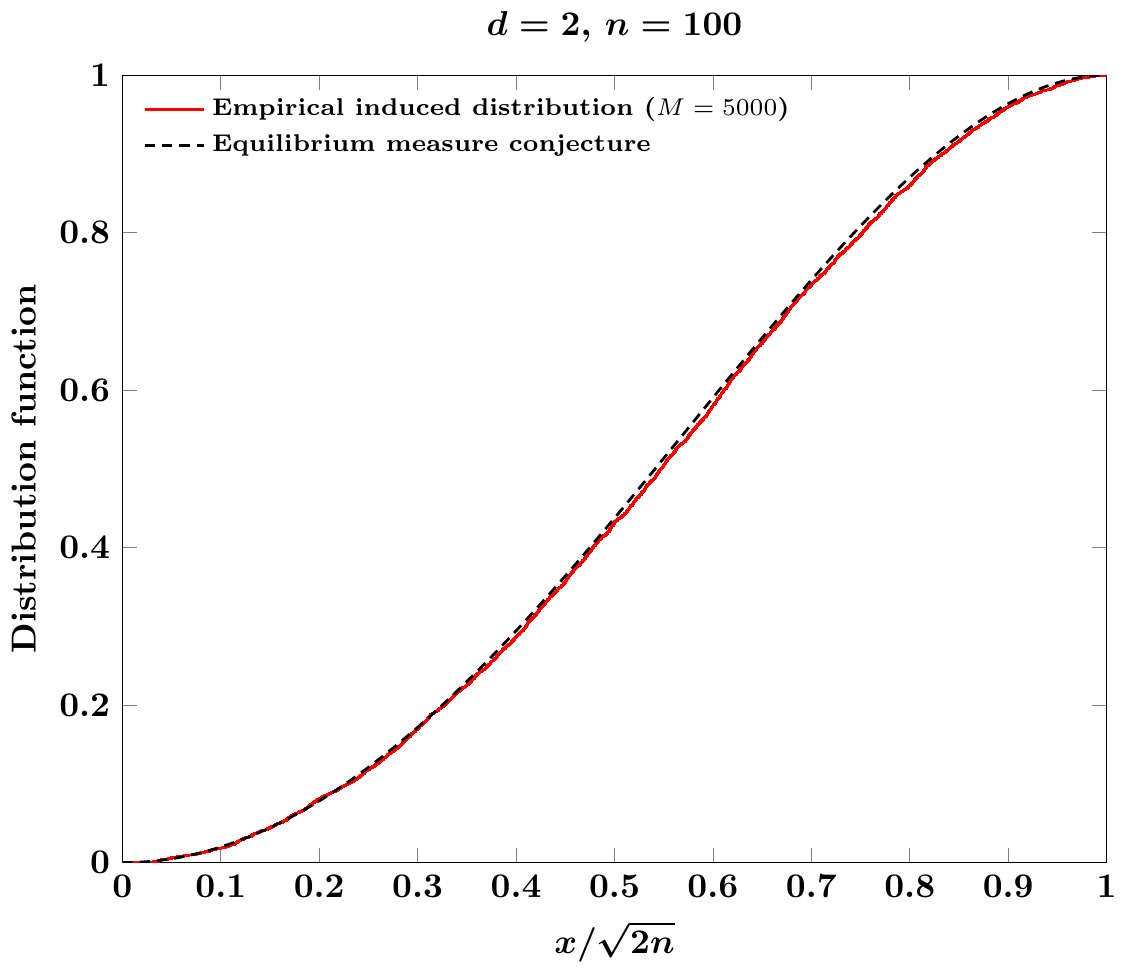}
    \includegraphics[width=0.32\textwidth]{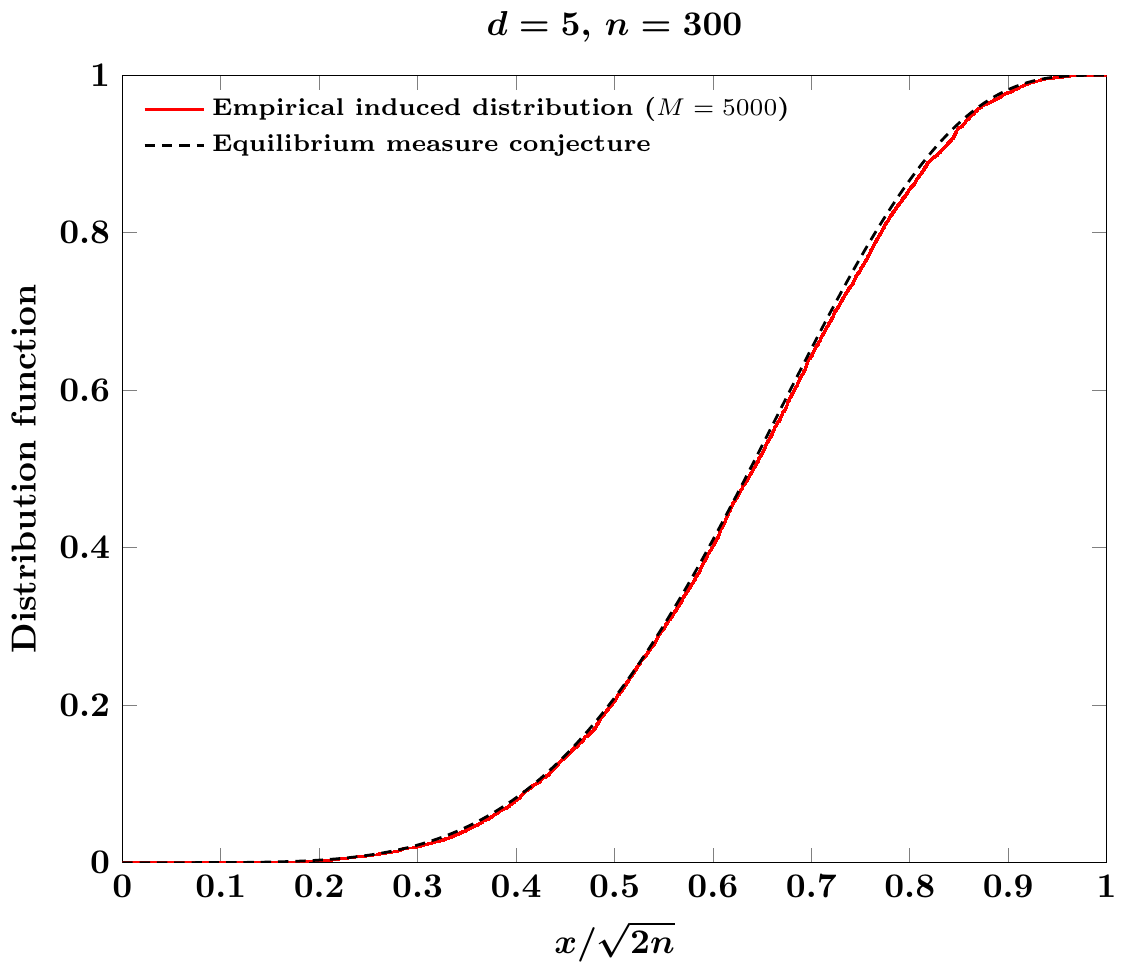}
    \includegraphics[width=0.32\textwidth]{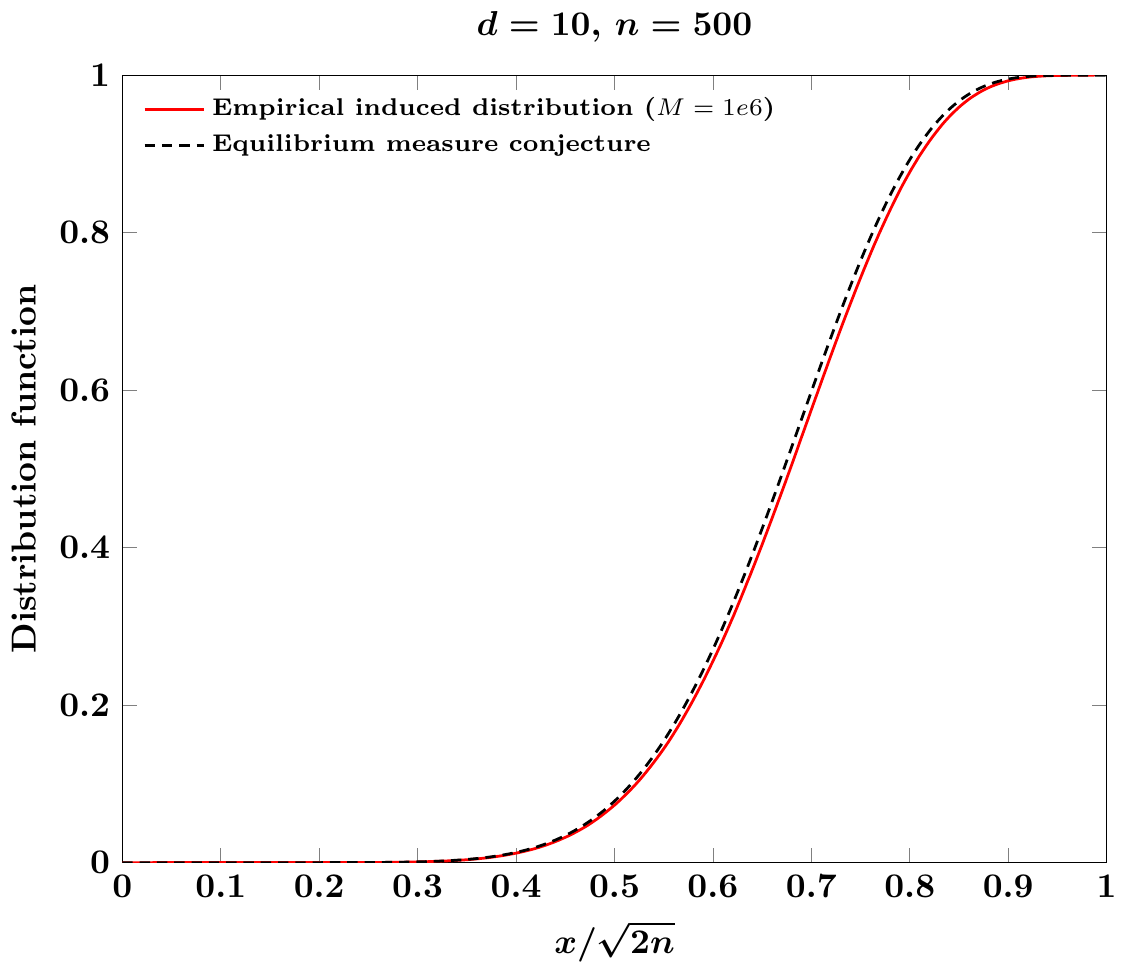}
  \end{center}
  \caption{Weighted equilibrium measure distribution conjecture \eqref{eq:equilibrium-conjecture-distribution} versus empirical distribution function of $M$ iid samples of $\|X\|/\sqrt{2n}$, where $X$ is drawn from the induced measure $\mu_{\Lambda_n}$. Left: $(d,n) = (2, 100)$. Middle: $(d,n) = (5, 300)$. Right: $(d,n) = (10, 500)$. That these distribution functions visually match gives credence to the conjecture \eqref{eq:conjecture} first formulated in \cite{narayan_christoffel_2016}.}\label{fig:equilibrium}
\end{figure}

\section{Conclusions}
We have developed a robust algorithm for the evaluation of induced polynomial distribution functions associated with a relatively wide class of continuous univariate measures. Our algorithms cover all classical orthogonal polynomial measures, and are equally applicable on bounded or unbounded domains. The algorithm leverages several properties of orthogonal polynomials in order to attain stability and accuracy, even for extremely large values of parameters defining the measure or polynomial degree. All computations have been tested up to degree $n=1000$ and were found to be stable. The ability to evaluate induced distributions allows the possibility to exactly sample from additive mixtures of these measures. Such additive mixtures define sampling densities that are known to be optimal for multivariate discrete least-squares polynomial approximation algorithms, and allow us to provide supporting empirical evidence for an asymptotic conjecture involving weighted pluripotential equilibrium measures.

\appendix

\section{Auxiliary recurrences}\label{app:auxilliary}

For some algorithmic tasks that we consider, the three-term recurrence \eqref{eq:three-term-recurrence} for the $p_n$ does not provide a suitable computational procedure due to floating-point under- and over-flow. This happens in two particular cases:
\begin{itemize}
  \item If $x$ is far outside $\supp{} \mu$, then $p_n(x)$ becomes very large and causes numerical overflow (the quantity grows like $x^n$). We will need to evaluate $p_n(x)/p_{n-1}(x)$ for large $x$ and potentially large $n$. (When $\supp{} \mu$ is infinite, one can interpret ``far outside $\supp{} \mu$" to be defined using the potential-theoretic Mhaskar-Rakhmanov-Saff numbers for $\sqrt{\dx{\mu}(x)}$.)
  \item When $x$ is inside $\supp \mu$, we will need to evaluate $p_n^2(x)/\sum_{j=0}^{n-1} p_j^2(x)$. For large enough $n$, a direct computation causes numerical overflow.
\end{itemize}
We emphasize that \eqref{eq:three-term-recurrence} is quite stable and sufficient for most practical computations requiring evaluations of orthogonal polynomials. The situations we describe above (which occur in this paper) are relatively pathological. 

\subsection{Ratio evaluations}\label{app:auxilliary:ratio}
We consider the first case described above. With $n$ fixed, suppose that either $x > \max p_{n-1}^{-1}(0)$, or $x < \min p_{n-1}^{-1}(0)$. Then by the interlacing properties of orthogonal polynomial zeros, $p_{j}(x) \neq 0$ for all $j=0, \ldots n-1$. In this case, the ratio
\begin{align}\label{eq:ratio-def}
  r_j(x) &\coloneqq \frac{p_j(x)}{p_{j-1}(x)}, & 1 \leq j < n,
\end{align}
is well-defined, with $r_0 \coloneqq p_0$. A straightforward manipulation of \eqref{eq:three-term-recurrence} yields
\begin{align}\label{eq:ratio-ttr}
  \sqrt{b_j} r_j(x) &= x - a_j - \frac{\sqrt{b_{j-1}}}{r_{j-1}(x)}, & 1 \leq j < n.
\end{align}
The recurrence \eqref{eq:ratio-ttr} is a more stable way to compute $r_j(x)$ when $x$ is very large. In practice we can computationally verify that $x$ lies outside the zero set of $p_{n-1}$ with $\mathcal{O}(n)$ effort (e.g., \cite[equation (11)]{nevai_asymptotic_1979} for a crude but general estimate). In the context of this paper, this condition is always satisfied whenever we require an evaluation of $r_n(x)$. 

\subsection{Normalized polynomials}\label{app:auxilliary:quadratic}
In the second case, consider a different normalization of $p_n$:
\begin{align}\label{eq:C-def}
  C_n(x) &\coloneqq \frac{p_n(x)}{\sqrt{\sum_{j=0}^{n-1} p_j^2(x)}}, & n &> 0, \; x\in \R %\\
\end{align}
with $C_0 \equiv p_0 = \sqrt{1/b_0}$. A manipulation of the three-term recurrence relation \eqref{eq:three-term-recurrence} yields the following recurrence for $C_n$:
\begin{subequations}\label{eq:C-recurrence}
\begin{align}
  C_0(x) &= \frac{1}{\sqrt{b_{0}}}, \\
  C_1(x) &= \frac{1}{\sqrt{b_{1}}} \left( x - a_0 \right), \\
  C_2(x) &= \frac{1}{\sqrt{b_{2}}\sqrt{1 + C_1^2(x)}} \left[ (x - a_1) C_1(x) - \sqrt{b_1} \right] \\
  C_{n+1}(x) &= \frac{1}{\sqrt{b_{n+1}}\sqrt{1 + C_n^2(x)}} \left[ (x - a_n) C_n(x) - \sqrt{b_n} \frac{C_{n-1}(x)}{\sqrt{1 + C_{n-1}^2(x)}}\right], \hskip 10pt n \geq 2
\end{align}
\end{subequations}
Note that $C_n(x)$ essentially behaves like $r_n$ outside a compact interval containing the zero set of $p_n$; however, $C_n$ is well-defined and well-behaved inside this compact interval, unlike $r_n$. The polynomials $p_n$ may be reproduced from knowledge of $C_n$:
\begin{align*}
  p_n(x) &= C_0 C_n(x) \prod_{j=1}^{n-1} \sqrt{1 + C_j^2(x)}, & n &> 0.
\end{align*}

\section{Polynomial measure modifications}\label{app:measure-modifications}

We will need to compute recurrence coefficients for the modified measures with densities
\begin{align*}
  \dx{\widetilde{\mu}}(x) &= \pm \left(x - y_0 \right) \dx{\mu}(x), & y_0 &\not\in \mathrm{supp}\;\mu\\
  \dx{\widetilde{\widetilde{\mu}}}(x) &= \left(x - z_0 \right)^2 \dx{\mu}(x), & z_0 &\in \R
\end{align*}
where we assume that the recurrence coefficients of $\mu$ are available to us. Here, both $y_0$ and $z_0$ are some fixed real-valued numbers. In the first case (a linear modification) we assume $y_0 \not\in \supp \mu$ and choose the sign to ensure that $\dx{\widetilde{\mu}}(x)$ is positive for $x \in \supp \mu$. Assuming the recurrence coefficients $a_n$ and $b_n$ for $\mu$ are known, the problems of computing the recurrence coefficients $\tilde{a}_n$ and $\tilde{b}_n$ for $\widetilde{\mu}$, and of computing the recurrence coefficients $\widetilde{\widetilde{a}}_n$ and $\widetilde{\widetilde{b}}_n$ for $\widetilde{\widetilde{\mu}}$, are well-studied and have constructive computational solutions \cite{golub_modified_1973}.

We use the auxiliary variables defined in Appendix \ref{app:auxilliary} to accomplish measure modifications. The linear and quadratic modification recurrence coefficients have the following forms (cf. \cite[Section 4]{narayan_computation_2012}):
\begin{subequations}\label{eq:modifications}
\begin{align}\label{eq:linear-modification}
  \widetilde{a}_n &= a_n + \widetilde{\Delta a}_n, & \widetilde{b}_n &= b_n \widetilde{\Delta b}_n, \\\label{eq:quadratic-modification}
  \widetilde{\widetilde{a}}_n &= a_{n+1} + \widetilde{\widetilde{\Delta a}}_n, & \widetilde{\widetilde{b}}_n &= b_{n+1} \widetilde{\widetilde{\Delta b}}_n
\end{align}
\end{subequations}
The correction factors for $n > 0$ are given by
\begin{align*}
  \widetilde{\Delta a}_n &= \frac{\sqrt{b_{n+1}}}{{r_{n+1}\left(y_0\right)}} - \frac{\sqrt{b_{n}}}{{r_n\left(y_0\right)}}, & \widetilde{\Delta b}_n &= \frac{\sqrt{b_{n+1}} r_{n+1}\left(y_0\right)}{\sqrt{b_n} r_n\left(y_0\right)}, \\
  \widetilde{\widetilde{\Delta a}}_n &= \sqrt{b_{n+2}} \frac{ C_{n+2}\left(y_0\right) C_{n+1}\left(y_0\right)}{\sqrt{1 + C_{n+1}^2\left(y_0\right)}} - \sqrt{b_{n+1}} \frac{ C_{n+1}\left(y_0\right) C_{n}\left(y_0\right)}{\sqrt{1 + C_{n}^2\left(y_0\right)}}, &
  \widetilde{\widetilde{\Delta b}}_n &= \frac{1 + C^2_{n+1}\left(y_0\right)}{1 + C_{n}^2\left(y_0\right)}
\end{align*}
For $n=0$ they take the special forms
\begin{align*}
  \widetilde{\Delta a}_0 &= \frac{\sqrt{b_{1}}}{{r_{1}\left(y_0\right)}}, & \widetilde{\Delta b}_0 &= \sqrt{b_{1}} r_{1}\left(y_0\right),\\
  \widetilde{\widetilde{\Delta a}}_0 &= \sqrt{b_{2}} \frac{ C_{2}\left(y_0\right) C_1\left(y_0\right)}{\sqrt{1 + C_{1}^2\left(y_0\right)}} - \sqrt{b_1} \frac{ C_1\left(y_0\right) C_{0}\left(y_0\right)}{\sqrt{1 + C_{0}^2\left(y_0\right)}}, &
  \widetilde{\widetilde{\Delta b}}_0 &= \frac{1+C_1^2(y_0)}{C_0^2}
\end{align*}
Above, $r_n(x) = r_n\left(x; \mu\right)$ and $C_n(x) = C_n\left(x; \mu\right)$ are the functions associated with the measure $\mu$ and so may be readily evaluated using \eqref{eq:ratio-ttr} and \eqref{eq:C-recurrence}. 

Note that if we only have a finite number of recurrence coefficients, $\left\{a_n, b_n \right\}_{n=0}^M$ for $\mu$, then a linear modification can only compute modified coefficients up to index $M-1$, and a quadratic modification can only compute coefficients up to index $M-2$.

\section{Freud and half-line Freud recurrence coefficients}\label{app:recurrence-coefficients}
For both cases of Freud measures with $\alpha = 2$ (generalized Hermite polynomials), and generalized Freud measures with $\alpha = 1$ (generalized Laguerre polynomials), explicit forms for the recurrence coefficients are known. However, the situation is more complicated for other values of $\alpha$. 

We give an extension of Lemma \ref{lemma:hermite-laguerre-relation}: Recurrence coefficients of generalized Freud weights may be computed from those of Freud weights.
\begin{lemma}
  Let parameters $(\alpha,\rho)$ define a Freud weight having recurrence coefficients $\left\{b_n\right\}_{n=0}^\infty$. (The $a_n$ coefficients vanish because the weight is even.) Define $(\alpha_\ast,\rho_\ast)$ and $(\alpha_{\ast\ast}, \rho_{\ast\ast})$ as in \eqref{eq:all-ast-def}, along with the associated generalized Freud measures $\mu_{F\ast}^{(\alpha_\ast,\rho_\ast)}$ and $\mu_{F\ast}^{(\alpha_{\ast\ast}, \rho_{\ast\ast})}$ and their recurrence coefficients $\left\{(a_{\ast,n}, b_{\ast,n})\right\}_{n=0}^\infty$ and $\left\{(a_{\ast\ast,n}, b_{\ast\ast,n})\right\}_{n=0}^\infty$, respectively. Then, for all $n$:
  \begin{subequations}
    \begin{align}\label{eq:ab0}
      a_{\ast,0} &= b_1, & b_{\ast,0} &= b_{0}, & & \\
      a_{\ast,n} &= b_{2 n} + b_{2 n + 1}, & b_{\ast,n} &= b_{2 n} b_{2 n -1}, & n &\geq 1
    \end{align}
  \end{subequations}
  Furthermore, 
  \begin{subequations}
    \begin{align}
      b_0 &= b_{\ast,0}, & b_1 &= a_{\ast,0}, & & \\
      b_{2n} &= b_{\ast,n}/b_{2n-1}, & b_{2n+1} &= a_{\ast,n} - b_{2n}, & n &\geq 1
    \end{align}
  \end{subequations}
\end{lemma}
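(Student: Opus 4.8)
The plan is to run the three-term recurrence for the Freud orthonormal family $p_n = p_n(\cdot;\mu_F^{(\alpha,\rho)})$ through the identities of Lemma \ref{lemma:hermite-laguerre-relation}, namely $p_{2n}(x) = p_{\ast,n}(x^2)$ and $p_{2n+1}(x) = h\,x\,p_{\ast\ast,n}(x^2)$, and read off the recurrence of the half-line family $\mu_{HF}^{(\alpha_\ast,\rho_\ast)}$. Since the Freud weight is even, all its $a_n$ vanish, so its recurrence is $x p_n(x) = \sqrt{b_n}\,p_{n-1}(x) + \sqrt{b_{n+1}}\,p_{n+1}(x)$. Evaluating this at an even index $n=2m$ and substituting the Lemma's identities (using $p_{2m-1}(x) = h\,x\,p_{\ast\ast,m-1}(x^2)$, with the convention $p_{\ast\ast,-1}\equiv 0$) gives, after dividing by $x$ — legitimate as an identity of polynomials, not merely pointwise — and setting $u=x^2$,
\[
  p_{\ast,m}(u) = h\sqrt{b_{2m}}\,p_{\ast\ast,m-1}(u) + h\sqrt{b_{2m+1}}\,p_{\ast\ast,m}(u).
\]
Doing the same at an odd index $n=2m+1$ and again setting $u=x^2$ gives
\[
  h\,u\,p_{\ast\ast,m}(u) = \sqrt{b_{2m+1}}\,p_{\ast,m}(u) + \sqrt{b_{2m+2}}\,p_{\ast,m+1}(u).
\]

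Next I would eliminate the $p_{\ast\ast,\cdot}$ family. Multiplying the first intertwining relation by $u$ and then using the second relation at indices $m$ and $m-1$ to rewrite $u\,p_{\ast\ast,m}(u)$ and $u\,p_{\ast\ast,m-1}(u)$, one collects
\[
  u\,p_{\ast,m}(u) = \sqrt{b_{2m}b_{2m-1}}\,p_{\ast,m-1}(u) + (b_{2m}+b_{2m+1})\,p_{\ast,m}(u) + \sqrt{b_{2m+1}b_{2m+2}}\,p_{\ast,m+1}(u).
\]
Because $\{p_{\ast,n}\}$ are orthonormal polynomials of exact degree $n$ (hence linearly independent), this display must coincide with the three-term recurrence for $\mu_{HF}^{(\alpha_\ast,\rho_\ast)}$; matching coefficients yields $a_{\ast,m} = b_{2m}+b_{2m+1}$ and $b_{\ast,m} = b_{2m}b_{2m-1}$ for $m\ge 1$, with the $\sqrt{b_{\ast,m+1}}$ term consistent under the index shift. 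For $m=0$ the elimination is run with $p_{\ast\ast,-1}\equiv 0$ and produces $a_{\ast,0} = b_1$, while comparing the constant terms in $p_0(x) = p_{\ast,0}(x^2)$ — i.e. $1/\sqrt{b_0} = 1/\sqrt{b_{\ast,0}}$ — gives $b_{\ast,0} = b_0$.

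Finally, the inverse relations are a purely algebraic inversion of the triangular system just obtained. The $m=0$ identities give $b_0 = b_{\ast,0}$ and $b_1 = a_{\ast,0}$; then, by induction on $n$, once $b_0,\dots,b_{2n-1}$ are known one recovers $b_{2n} = b_{\ast,n}/b_{2n-1}$ (well-defined since $b_{2n-1}>0$ for a measure with infinitely many points of increase) and $b_{2n+1} = a_{\ast,n} - b_{2n}$, which exhausts all indices.

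I do not anticipate a substantial obstacle here: the only points needing care are (i) justifying the division by $x$ and the substitution $u=x^2$ as genuine polynomial identities, and (ii) bookkeeping the index shifts and the $m=0$ boundary case consistently, in particular the convention $p_{\ast\ast,-1}\equiv 0$. Everything else is the substitution and coefficient matching sketched above.
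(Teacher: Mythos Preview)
Your proposal is correct and is essentially the same approach the paper indicates: the paper's proof is simply ``a result of Lemma~\ref{lemma:hermite-laguerre-relation} along with manipulations of the three-term recurrence relation~\eqref{eq:three-term-recurrence},'' and you have carried out precisely those manipulations in detail. Your handling of the two subtleties you flag (division by $x$ as a polynomial identity and the $m=0$ boundary via $p_{\ast\ast,-1}\equiv 0$) is accurate.
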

This result implies that one may either use Freud weight recurrence coefficients to compute Half-line Freud weight recurrence coefficients, or vice versa. The proof is a result of Lemma \ref{lemma:hermite-laguerre-relation} along with manipulations of the three-term recurrence relation \eqref{eq:three-term-recurrence}.

\bibliographystyle{siamplain}
\bibliography{references}

\end{document}